\def\V#1{{\mathbf #1}}
\def\Uniform{\operatorname{Uniform}}
\def\Haar{\operatorname{Haar}}
\def\B{\operatorname{B}}
\def\H{\operatorname{H}}
\def\O{\operatorname{O}}
\def\U{\operatorname{U}}
\def\SO{\operatorname{SO}}
\def\sgn{\operatorname{sgn}}
\DeclareMathOperator*{\argmax}{arg{,}max}
\def\t{\theta}
\def\vep{\varepsilon}
\def\trace{\operatorname{tr}}
\newtheorem{lemma}{Lemma}
\newtheorem{proposition}{Proposition}
\newtheorem{theorem}{Theorem}
\newtheorem{corollary}{Corollary}
\newtheorem{remark}{Remark}
\def\red#1{#1}
\title{Complete pivoting growth of butterfly matrices and butterfly Hadamard matrices}
\author{John Peca-Medlin\thanks{Department of Mathematics, University of California, San Diego, \href{mailto:jpecamedlin@ucsd.edu}{jpecamedlin@ucsd.edu}}}
\date{}
\begin{document}

\maketitle

% REQUIRED
\begin{abstract}
The growth problem in Gaussian elimination (GE) remains a foundational question in numerical analysis and numerical linear algebra. Wilkinson resolved the growth problem in GE with partial pivoting (GEPP) in his initial analysis from the 1960s, while he was only able to establish an upper bound for the GE with complete pivoting (GECP) growth problem. The GECP growth problem has seen a spike in recent interest, culminating in improved lower and upper bounds established by Bisain, Edelman, and Urschel in 2023, but still remains far from being fully resolved. Due to the complex dynamics governing the location of GECP pivots, analysis of GECP growth for particular input matrices often estimates the actual growth rather than computes the growth exactly. We present a class of dense random butterfly matrices \red{for which we can compute} the exact GECP growth. We extend previous results that established exact growth computations for butterfly matrices when using GEPP and GE with rook pivoting (GERP) to now also include GECP \red{for structured subclasses of inputs}. Moreover, we present a new method to construct random Hadamard matrices using butterfly matrices. %, and include numerical simulations to support  

\end{abstract}

\section{Introduction}

Gaussian elimination (GE) remains one of the most used approaches to solve linear systems in modern applications. For instance, GE with partial pivoting (GEPP) is the default solver in MATLAB when using the backslash operator with a general \red{dense} input matrix. Additionally, GE is a staple of introductory linear algebra courses (although, based on anecdotal evidence, it may not be a favorite among all students).

GE iteratively uses elimination updates below the diagonal on an initial input matrix $A \in \mathbb R^{n \times n}$ to transform $A$ into an upper triangular linear system, which eventually builds the matrix factorization $A = LU$, where $L$ is a unit lower triangular matrix and $U$ is upper triangular. Each GE step transforms $A = A^{(1)}$ into the upper triangular form $U = A^{(n)}$ using $n-1$ total GE steps, where $A^{(k)}$ denotes the intermediate form of $A$ that has all zeros below the first $k-1$ entries. The lower triangular matrix $L$ is built up by successively using the \textit{pivot}, i.e., the \red{leading diagonal} entry in the untriangularized system, to scale the leading row to zero out all remaining entries below the pivot. This can then be used to solve $A \V x = \V b$ by solving the computationally simpler triangular linear systems $L \V y = \V b$ and $\V U \V x = \V y$ using forward and backward substitutions. General nonsingular $A$ will not have a $LU$ factorization if any leading minors are singular, i.e., if $\det [(A_{ij})_{i,j=1}^k] = 0$ for some $k \le n-1$. Moreover, numerical stability of GE when using floating-point arithmetic\footnote{We will use the IEEE standard model for floating-point arithmetic.} is sensitive to any elimination steps that involve division by numbers close to 0. Hence, combining GE with certain pivoting schemes is preferable even if not necessary. 

GEPP remains the most popular pivoting scheme, which combines a pivot search in the leading column of the untriangularized system to then also use a row transposition so that the pivot is maximal in magnitude. This results in the GEPP factorization $PA = LU$ where $P$ is a permutation matrix, with corresponding permutation 
\begin{equation}\label{eq: perm form}
    \sigma(A) = (n \ i_n) \cdots (2 \ i_2)(1 \ i_1),
\end{equation}
where $i_k \ge k$ is defined by $i_k = \min\{\argmax_{j \ge k} |A^{(k)}_{jk}|\}$ (corresponding to the closest maximal in magnitude entry to the original pivot entry). Moreover, also $\|L\|_{\max} = 1$ (using $\|C\|_{\max} = \max_{i,j} |C_{ij}|$ for the max-norm on matrices) since then $L_{ij} = A^{(j)}_{ij}/A^{(j)}_{jj}$ satisfies $|L_{ij}| \le 1$ for all $i > j$.  We will focus in this paper on GE with complete pivoting (GECP), that uses both a corresponding row and column pivot to ensure the  pivot is maximal in the entire untriangularized system (rather than just the first column). GECP factorizations are then of the form $PAQ=LU$ where both $P,Q$ are permutation matrices, for permutations similarly in form  \eqref{eq: perm form}. Moreover, we will assume GECP uses the default \textit{lexicographic} tie-breaking pivot search strategy assuming a \textit{column-major} traversal, i.e., at GECP step $k$, use the corresponding row and column swaps for the location of the first maximal magnitude entry in the minimal column and then minimal row entry. This tie-breaking strategy aligns with MATLAB's column-major storage order, which stacks matrix columns from left to right. It then performs a GEPP pivot search on the extended column to determine the maximal element.

Using floating-point arithmetic, the numerical stability of computed solutions using GE on well-conditioned matrices relies heavily on the growth factor,
\begin{equation}\nonumber
    \rho(A) = \frac{\max_k \|A^{(k)}\|_{\max}}{\|A\|_{\max}}.
\end{equation}
(The growth factor uses intermediate \textit{computed} matrix factors, $\overline{A^{(k)}}$ with finite precision; we will refrain from highlighting the distinction between computed and exact intermediate factors throughout, noting also that computed and actual matrix factors align in exact arithmetic.) Using GECP, then $\max_k \|A^{(k)}\|_{\max} = \max_j |U_{jj}|$ so that $\rho(A) = \max_j |U_{jj}|/|U_{11}|$. For a quick demonstration of  how the growth factor can be used in error analysis of computed solutions, we consider a computed solution $\hat{\V x}$ in floating-point arithmetic using the GE factorization $PAQ = LU$ to the linear system $A \V x = \V b$ where $A \in \mathbb R^{n\times n}$ and $\V b \in \mathbb R^n$. Then, as seen in \cite{Hi02}, the relative error then satisfies
\begin{equation}\label{eq: gf ineq}
    \frac{\|\V x - \hat{\V x}\|_\infty}{\|\V x\|_\infty} \le 4 n^2 \epsilon_{\operatorname{machine}} \kappa_\infty(A) \rho(A)
\end{equation}
where $\kappa(A) = \|A\|_\infty \|A^{-1}\|_\infty$ is the $L^\infty$-induced condition number of $A$ where $\epsilon_{\operatorname{machine}}$ is the machine-epsilon that denotes the minimal positive number such that the floating-point representation  $\operatorname{fl}(1 + \epsilon_{\operatorname{machine}}) \ne 1$ (e.g., $\epsilon_{\operatorname{machine}} = 2^{-52}$ using double precision). Hence, on well-conditioned linear systems, error analysis of computed solutions can focus on analysis of the corresponding growth factors.

The growth problem is the moniker given to the ordeal of determining the largest possible growth factors on matrices of a given order for a given pivoting scheme. In \cite{Wi61,Wi65}, Wilkinson resolved in short order the growth problem using GEPP, by presenting a $n\times n$ matrix  such that the GEPP growth factor attains the worst-case possible growth of $2^{n-1}$. In the same work, Wilkinson only established a \red{quasipolynomial general} upper bound for the GECP growth factor of order $n^{0.25 \log n(1 + o(1))}$. This bound, however, is believed to be far from optimal. The GECP growth problem (which will be referred to from here on simply as ``the growth problem'') is only resolved for $n = 1,2,3,4$, with maximal growth, respectively, of 1, 2, 2.25, 4, while for $n = 5$ we only have the upper bound $4 \frac{17}{18}\approx 4.94$ \red{that has recently been lowered to 4.84 in \cite{chen2026largest} but is conjectured to be about 4.1325} (see \cite{EdUr23} for a historical overview of the growth problem). Wilkinson's \red{general} upper bound remained essentially the best in the business for over 6 decades until 2023, when Bisain, Edelman, and Urschel used an improved Hadamard inequality argument to lower this now to about $n^{0.2079 \log n (1+o(1))}$ \cite{Bisain}. When compared with the current best linear lower bounds, also established by Edelman and Urschel earlier in 2023 in \cite{EdUr23}, the growth problem still is far from being resolved.

The difficulty in the growth problem largely comes down to the increasingly complex dynamics between entries for each iterative pivot search and subsequent elimination step. Instead of considering the growth problem across all nonsingular matrices, one can then focus on the growth problem for certain structured matrices. Hadamard matrices have their own rich history with the growth problem. $H_n \in \{\pm 1\}^{n \times n}$ is a Hadamard matrix if $H_n H_n^\top = n \V I_n$, where necessarily $n = 1,2$ or $n$ is a multiple of 4 (see \cite{DaPe88} for a larger historical overview). Hadamard matrices have a wide list of applications, including optimal design theory, coding theory, and graph theory (see, for example, \cite{Hi02,Kravvaritis_2016}, and references therein for more background). A famous open problem with Hadamard matrices is the existence of such matrices for all multiples of 4, with 768 currently being the smallest such order that no known Hadamard matrices have been found yet (\cite{Dok08} resolved the previously lower bound of 764 in 2008). A similarly famous open problem for numerical analysis involves the growth problem for Hadamard matrices, where it is believed $\rho(H_n) = n$. This has been established for all Sylvester Hadamard matrices (see \cite{DaPe88}), while for general Hadamard matrices this has only been proven up to $n = 16$ \cite{KrMi09}. The growth problem for Hadamard matrices is a sub-question for the growth problem when restricted to orthogonal matrices. For instance, the orthogonal growth problem remains open in GEPP even, which was recently explored in \cite{p24_orth}.

We are interested in studying the Hadamard growth problem, as viewed in a continuous analogue version using butterfly matrices. (See \Cref{sec:rbm} for a definition of butterfly matrices.) Butterfly matrices were introduced by D. Stott Parker as a means to remove the need of pivoting altogether using GE. For example, to solve $A \V x = \V b$, one can use independent and identically distributed (iid) random butterfly matrices $U,V$ to instead solve the equivalent linear systems $UAV^* \V y = U\V b$ and $\V x = V^*\V y$, but now using GE with no pivoting (GENP) \cite{Pa95}. The primary utility in using butterfly matrices to solve this preceding transformed linear system is their recursive structure allows matrix-vector products to be performed in $\mathcal O(n \log n)$ FLOPs;  so these transformations can be performed without impacting the $\mathcal O(n^3)$ leading order complexity of GE itself. 

In \cite{PT23}, the authors studied the stability properties of these butterfly transformed systems for particular initial linear systems by studying their corresponding computed growth factors when using GENP, GEPP, GE with rook pivoting (GERP, that uses iterative column and then row searches for a pivot candidate that maximizes \textit{both} its column and row) and GECP. Moreover, they provide a full distributional description of the GENP, GEPP, and GERP growth factors for a particular subclass of random butterfly matrices themselves. One of our main contributions of this paper is to now extend this full distributional description to now also include GECP growth factors for a further subclass of these butterfly matrices. We also explore structural properties of GECP factorizations using different tie-breaking pivot search strategies. Additionally, we re-contextualize the discussion of butterfly matrices in terms of their growth maximizing properties for a further subclass of scaled Hadamard matrices, which we will refer to as butterfly Hadamard matrices. We then introduce a direct means to sample random butterfly Hadamard matrices.

\subsection{Outline of results}

\red{The starting point for this work is \Cref{thm: gf gepp} from \cite{PT23}, which provides a full distributional description of the GEPP growth factor for simple butterfly matrices. This paper extends that result by identifying a class of butterfly matrices that do not require any row or column pivots under GECP, i.e., they are completely pivoted. Our main theoretical contribution is \Cref{thm: new}, which establishes this property and, in addition, provides the distribution of the associated random growth factors for matrices that preserve a monotonicity property of the input angle vector.} %The main theoretical contribution of this work is \Cref{thm: new}, which provides a class of butterfly matrices that do not need any row or column pivots when using GECP, i.e., they are completely pivoted. Additionally, this provides the distribution for particular random growth factors, associated to random butterfly matrices that preserve a monotonicity property for their input angle vector. This extends a previous result from \cite{PT23} that provided a full distributional description of the GEPP growth factor for particular butterfly matrices. 

\Cref{sec:rbm} is organized to first give sufficient background and highlight particular properties of random butterfly matrices, which is then utilized in the main technical approach given in \Cref{prop: gecp} to maximize growth at intermediate GECP steps \red{needed for the proof of \Cref{thm: new}}. \Cref{sec: experiments} provides additional supporting experiments that consider GECP factorizations for butterfly matrices with different initial input orders, as well as with and without using GECP with an additional tolerance parameter for determining intermediate pivot candidates when using floating-point arithmetic. \Cref{sec: had} focuses on a subclass of scaled Hadamard matrices within the butterfly matrices, called the butterfly Hadamard matrices. This class represents a rich subclass of Hadamard matrices (of order $2^{\mathcal O(N \log N)}$), that necessarily maximizes the growth factor in \Cref{thm: new} for a further subclass of simple scalar butterfly matrices. Moreover, we present a simple method to construct a particular butterfly Hadamard matrix for a given input random butterfly matrix \red{using the componentwise sign function}.

\section{Growth factors of random butterfly matrices}
\label{sec:rbm}

Butterfly matrices are a family of recursive orthogonal transformations that were introduced by D. Stott Parker in 1995 as a means of accelerating common computations in linear algebra \cite{Pa95}. We will use a particular family of butterfly matrices built up using rotation matrices, studied in more detail previously in \cite{PT23,Tr19}. We will now review particular definitions of these butterfly matrices, along \red{with} certain GE matrix factorization forms and growth factor results for particular butterfly matrices.

\subsection{Random butterfly matrices}

An order $N=2^n$ \textit{butterfly matrix} $B$ takes the recursive form
\begin{equation}
\label{eq:bm_def}
    B = \begin{bmatrix} C & S\\-S & C\end{bmatrix} \begin{bmatrix} A_1 & \V0 \\\V 0&A_2\end{bmatrix} = \begin{bmatrix} C A_1 & S A_2\\-S A_1 & C A_2\end{bmatrix},
\end{equation}
using symmetric and commuting matrices, $C,S$,  that satisfy the matrix Pythagorean identity $C^2+S^2=\V I_{N/2}$ and $A_1,A_2$ are order $N/2$ butterfly matrices, where we define $B = 1$ if $N = 1$.  If $A_1=A_2$ at each recursive step, then the resulting butterfly matrices are called \textit{simple} butterfly matrices. By construction, necessarily $B \in \SO(N)$ and also $(C,S)$ have corresponding eigenvalue pairs $(\cos\theta,\sin\theta)$ for some angle $\theta$. The primary utility for butterfly matrices is due to the fast matrix-vector multiplication afforded by this recursive structure, which uses $\mathcal O(Nn)$ FLOPs assuming $C,S$ have $\mathcal O(N)$ matrix-vector multiplication complexity; this can be ensured by then using diagonal $C,S$ at each recursive step. 

Let $\B(N)$ and $\B_s(N)$ denote the order $N$ \textit{scalar} nonsimple and  simple butterfly matrices, formed using scalar  $(C_k,S_k)=(\cos\theta_k,\sin\theta_k) \V I_{2^{k-1}}$ at each recursive step $k$ for some angle $\theta_k$. Similarly, let $\B^{(D)}(N)$ and $\B_s^{(D)}(N)$ denote the order $N$ \textit{diagonal} nonsimple and simple butterfly matrices, formed using diagonal $(C_k,S_k) = \bigoplus_{j=1}^{2^{k-1}}(\cos \theta_k^{(j)},\sin \theta_k^{(j)})$ for each recursive step $k$.

Recall the Kronecker product of $A \in \mathbb R^{n_1 \times m_1}, B \in \mathbb R^{n_2 \times m_2}$ is the matrix $A \otimes B \in \mathbb R^{n_1n_2 \times m_1m_2}$ of the form
\begin{equation}\nonumber
    A \otimes B = \begin{bmatrix}
        A_{11}B &  \cdots & A_{1m_1} B\\
        \vdots  & \ddots & \vdots\\
        A_{n_1 1} B & \cdots & A_{n_1m_1} B
    \end{bmatrix}.
\end{equation}
We will also use the notation $A \oplus B \in \mathbb R^{(n_1 + n_2)\times(m_1 + m_2)}$ to denote the block diagonal matrix with diagonal blocks $A,B$. Using these notations, $B_1 \in \B(N)$ and $B_2 \in \B_s(N)$ have \eqref{eq:bm_def} take the forms
\begin{equation}\nonumber
    B_1 = (R_\theta \otimes \V I_{N/2})(A_1 \oplus A_2), \qquad B_2 =\bigotimes_{j=1}^n R_{\theta_j}, \qquad \mbox{with} \qquad R_\theta = \begin{bmatrix}
        \cos\theta & \sin \theta \\ - \sin\theta & \cos \theta
    \end{bmatrix}
\end{equation}
denoting the standard (clockwise) rotation matrices. Further recall the Kronecker product satisfies the \textit{mixed-product property}, where
\begin{equation}\nonumber
    (A \otimes B)(C \otimes D) = (AC) \otimes (BD)
\end{equation}
when matrix dimensions are compatible. Moreover, both $\otimes$ and $\oplus$ preserve certain matrix structures and operators, such as permutation matrices, triangular forms, inverses, and transposes. Additionally, there exist \textit{perfect shuffle} permutations (see \cite{perfectShuffle}) $P,Q$ such that $P(A \otimes B)Q = B \otimes A$; if $A,B$ are both square, then $Q = P^\top$. Hence, for $Q_n$ the perfect shuffle matrices such that $Q_n(A \otimes \V I_{N/2})Q_n^\top = \V I_{N/2} \otimes A$ for $A \in \mathbb R^{2\times 2}$, then the diagonal butterfly matrices $B_1 \in \B^{(D)}(N)$ and $B_2 \in \B^{(D)}_s(N)$ have \eqref{eq:bm_def} take the form
\begin{equation}\label{eq: diag form}
    B_1 = Q_n\left(\bigoplus_{j=1}^{N/2} R_{\theta_j}\right)Q_n^\top \cdot (A_1 \oplus A_2)
\end{equation}
for $A_1,A_2 \in \B^{(D)}(N/2)$, where additionally $A_1 \oplus A_2 = \V I_2 \otimes A$ in the simple case  $A_1=A_2 = A \in \B^{(D)}_s(N/2)$.  A straightforward check confirms the number $M$ of input angles needed to generate a butterfly matrix from $\B_s(N),\B(N),\B_s^{(D)}(N),\B^{(D)}(N)$ is, respectively, $n$, $N-1$, $N-1$, and $nN/2$. We will write $B(\boldsymbol{\t})$ with $\boldsymbol{\t} \in [0,2\pi)^M$ then for each respective type of butterfly matrix.

Random butterfly matrices are formed using random input angles. Let $\Sigma$ be a collection of generating pairs $(C_k,S_k)$ satisfying the above butterfly properties with random input angles. Then $\B(N,\Sigma)$ and $\B_s(N,\Sigma)$ will denote the ensembles of random butterfly matrices and random simple butterfly matrices formed by constructing a butterfly matrix by independently sampling $(C,S)$ from $\Sigma$ at each recursive step. Let
\begin{align*}
    \Sigma_S &= \{(\cos\theta^{(k)},\sin\theta^{(k)})\V I_{2^{k-1}}: \theta^{(k)} \mbox{ iid }  \operatorname{Uniform}([0,2\pi)), k\ge 1\}
\quad \mbox{and}\\
    \Sigma_D &= \{ \bigoplus_{j=1}^{2^{k-1}}(\cos\t_j^{(k)},\sin\t_j^{(k)}): \theta^{(k)}_j \mbox{ iid }  \operatorname{Uniform}([0,2\pi)),  k\ge 1\}.
\end{align*}
Note from the mixed-product property, $\B_s(N) = \bigotimes^n \SO(2)$ is an abelian subgroup of $\SO(N)$, and hence has a Haar probability measure. Using uniform input angles then leads to induced uniform measures on the push-forward transformations, so that:
\begin{proposition}[\cite{Tr19}]
\label{prop:haar_butterfly}
    $\B_s(N,\Sigma_S) \sim \Haar(\B_s(N))$.
\end{proposition}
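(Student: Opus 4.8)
The plan is to realize the ensemble $\B_s(N,\Sigma_S)$ as the pushforward of the normalized Haar measure on an $n$-dimensional torus under the natural angle-parametrization map, and then to invoke uniqueness of Haar measure on the compact group $\B_s(N)$. Write $N = 2^n$, let $\mathbb T = \mathbb R/2\pi\mathbb Z$ (which we identify with $[0,2\pi)$), and recall that $\mathbb T^n$ is a compact abelian group under coordinatewise addition whose normalized Haar measure $\mu$ is exactly the $n$-fold product of $\operatorname{Uniform}([0,2\pi))$. Define $\phi\colon \mathbb T^n \to \SO(N)$ by $\phi(\boldsymbol\theta) = \bigotimes_{j=1}^n R_{\theta_j}$. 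By construction, the distribution of a random element of $\B_s(N,\Sigma_S)$ is precisely $\phi_*\mu$, so it suffices to show $\phi_*\mu = \Haar(\B_s(N))$.

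First I would check that $\phi$ is a continuous surjective homomorphism of topological groups onto $\B_s(N)$. Continuity is immediate since the entries of $\phi(\boldsymbol\theta)$ are products of $n$ sines and cosines of the coordinates $\theta_j$, and surjectivity onto $\B_s(N)$ follows directly from the definition of simple scalar butterfly matrices (or a one-line induction on $n$ using \eqref{eq:bm_def} with $(C_k,S_k) = (\cos\theta_k,\sin\theta_k)\V I_{2^{k-1}}$). The homomorphism property uses two facts already recorded in the excerpt, namely $R_\theta R_\psi = R_{\theta+\psi}$ for planar rotations and the mixed-product property of the Kronecker product:
\begin{equation}\nonumber
\phi(\boldsymbol\theta)\phi(\boldsymbol\psi) = \Bigl(\bigotimes_{j=1}^n R_{\theta_j}\Bigr)\Bigl(\bigotimes_{j=1}^n R_{\psi_j}\Bigr) = \bigotimes_{j=1}^n R_{\theta_j}R_{\psi_j} = \bigotimes_{j=1}^n R_{\theta_j+\psi_j} = \phi(\boldsymbol\theta+\boldsymbol\psi).
\end{equation}
Consequently $\B_s(N) = \phi(\mathbb T^n)$ is the continuous image of a compact set, hence a compact (Hausdorff) subgroup of $\SO(N)$, and therefore carries a unique Haar probability measure. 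It then remains to verify that $\phi_*\mu$ is left-translation invariant: for $g = \phi(\boldsymbol\psi)\in\B_s(N)$ and a Borel set $E\subseteq\B_s(N)$ one has $\phi^{-1}(gE) = \boldsymbol\psi + \phi^{-1}(E)$ (the preimages are unions of $\ker\phi$-cosets), so $(\phi_*\mu)(gE) = \mu(\boldsymbol\psi+\phi^{-1}(E)) = \mu(\phi^{-1}(E)) = (\phi_*\mu)(E)$ by translation invariance of $\mu$ on $\mathbb T^n$. By uniqueness of Haar measure, $\phi_*\mu = \Haar(\B_s(N))$.

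The one genuinely delicate point is that $\phi$ is \emph{not} injective — for instance $-R_\theta = R_{\theta+\pi}$, so $\phi(\ldots,\theta_i,\ldots,\theta_j,\ldots) = \phi(\ldots,\theta_i+\pi,\ldots,\theta_j+\pi,\ldots)$ — which is why one cannot simply transport measures by a change of variables along a bijection, and must instead argue via translation invariance and uniqueness of Haar measure as above (equivalently, $\B_s(N)$ is the quotient torus $\mathbb T^n/\ker\phi$ and $\phi_*\mu$ is its normalized Haar measure). The remaining ingredients — existence and uniqueness of Haar measure on a compact group, and the identification of Haar measure on $\mathbb T^n$ with the product of uniform measures — are standard, so aside from this observation the proof is essentially bookkeeping.
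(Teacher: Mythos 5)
Your proof is correct and takes essentially the same route the paper sketches (deferring details to \cite{Tr19}): by the mixed-product property $\B_s(N) = \bigotimes^n \SO(2)$ is a compact abelian group, and the iid uniform input angles push forward to a translation-invariant Borel probability measure on it, which must be $\Haar(\B_s(N))$ by uniqueness. Your explicit handling of the non-injectivity of $\boldsymbol\theta \mapsto \bigotimes_{j} R_{\theta_j}$ (arguing via invariance and uniqueness rather than a change of variables) is a correct and worthwhile detail that the paper leaves implicit.
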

We will then refer to $\B_s(N,\Sigma_S)$ as the \textit{Haar-butterfly matrices}, as this explicit construction provides a method to directly sample from this distribution.

% \subsubsection{Connectedness of butterfly matrices}
A natural question using butterfly transformations on linear systems is to determine how far away is the transformed system from the input system. Since butterfly matrices are orthogonal, then multiplication then is equivalent to taking a one-step orthogonal walk with initial location at the input matrix; so multiplying by more butterfly matrices then corresponds to a longer orthogonal walk. 

We can measure how far a step is in this orthogonal walk by just viewing how much does perturbing the input angles change the overall matrix. This can be answered explicitly by computing bounds for $\|B(\boldsymbol\t) - B(\boldsymbol \t + \boldsymbol \varepsilon)\|_F$, which show the map $\boldsymbol \t \mapsto B(\boldsymbol \t)$ is Lipschitz continuous. 

\begin{proposition}
\label{prop: lipschitz}
The maps $\boldsymbol\t \mapsto B(\boldsymbol\t)$ for $B(\boldsymbol\t)$ in $\B_s(N),\B(N),\B_s^{(D)}(N)$ and $\B^{(D)}(N)$ are each Lipschitz continuous with respect to the $2$-norm to Frobenius normed spaces, with respective Lipschitz constants of $\sqrt{N}$, $\sqrt{2(N-1)}$, $\sqrt{2(N-1)}$ and $\sqrt{2n}$.
\end{proposition}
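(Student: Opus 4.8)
The plan is to reduce every assertion to a bound $\|B(\boldsymbol\t)-B(\boldsymbol\t')\|_F\le L\,\|\boldsymbol\t-\boldsymbol\t'\|_2$ built from three elementary facts: (i) $\|R_\t-R_{\t'}\|_F^2 = 4\bigl(1-\cos(\t-\t')\bigr)\le 2(\t-\t')^2$, using $1-\cos x = 2\sin^2(x/2)\le x^2/2$; (ii) the Frobenius norm and inner product are multiplicative across Kronecker products, $\|A\otimes B\|_F=\|A\|_F\|B\|_F$ and $\langle A\otimes B,C\otimes D\rangle_F=\langle A,C\rangle_F\langle B,D\rangle_F$, and Pythagorean across direct sums, $\|A\oplus B\|_F^2=\|A\|_F^2+\|B\|_F^2$; and (iii) $\|\cdot\|_F$ is invariant under left or right multiplication by an orthogonal matrix, with $\|\V I_m\|_F=\sqrt m$. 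I would handle $\B_s(N)$ separately and the other three families by one common induction on $N$.

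For $B(\boldsymbol\t)=\bigotimes_{j=1}^n R_{\t_j}\in\B_s(N)$ I would compute directly. Since $\langle R_\t,R_{\t'}\rangle_F=\trace(R_\t^{T}R_{\t'})=2\cos(\t-\t')$, expanding $\|X-Y\|_F^2=\|X\|_F^2-2\langle X,Y\rangle_F+\|Y\|_F^2$ with (ii) gives $\|B(\boldsymbol\t)-B(\boldsymbol\t')\|_F^2=2N\bigl(1-\prod_{j=1}^n\cos(\t_j-\t'_j)\bigr)$. It then suffices to prove $1-\prod_{j=1}^n\cos\delta_j\le\tfrac12\sum_{j=1}^n\delta_j^2$ for all real $\delta_j$; this follows by induction on $n$, writing $P=\prod_{j<n}\cos\delta_j$ and using $1-P\cos\delta_n=(1-P)+P(1-\cos\delta_n)\le(1-P)+(1-\cos\delta_n)$ (valid since $|P|\le 1$ and $1-\cos\delta_n\ge 0$) together with the inductive hypothesis and the $n=1$ case. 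This produces the Lipschitz constant $\sqrt N$.

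For $\B(N)$, $\B^d(N)$, and $\B_s^d(N)$ I would induct on $N$, with base case $N=2$ (each family is then $\{R_\t\}$ with constant $\sqrt 2$, which matches each claimed value at $N=2$). In the inductive step, write $B=TD$ via \eqref{eq:bm_def} or \eqref{eq: diag form}, with $T$ the orthogonal ``top'' factor carrying the newest angle(s) and $D$ the block-diagonal assembled from the order-$N/2$ butterflies: $D=A_1\oplus A_2$ with $A_1,A_2$ independent for $\B(N)$ and $\B^d(N)$, and $D=\V I_2\otimes A$ for $\B_s^d(N)$. Then decompose $B(\boldsymbol\t)-B(\boldsymbol\t')=T(\boldsymbol\t)\bigl(D(\boldsymbol\t)-D(\boldsymbol\t')\bigr)+\bigl(T(\boldsymbol\t)-T(\boldsymbol\t')\bigr)D(\boldsymbol\t')$; by (iii) the Frobenius norms of the two pieces reduce to $\|D(\boldsymbol\t)-D(\boldsymbol\t')\|_F$ and $\|T(\boldsymbol\t)-T(\boldsymbol\t')\|_F$. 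By (i)--(iii) the ``top'' piece contributes $\sqrt N\,|\t-\t'|$ for $\B(N)$ (from $\|(R_\t-R_{\t'})\otimes\V I_{N/2}\|_F= \|R_\t-R_{\t'}\|_F\sqrt{N/2}\le\sqrt2\,|\t-\t'|\sqrt{N/2}$) and $\sqrt2\,\|\boldsymbol{\t}'_{\mathrm{top}}-\boldsymbol{\t}_{\mathrm{top}}\|_2$ for $\B^d(N)$ and $\B_s^d(N)$ (from a direct sum of $N/2$ rotation differences conjugated by $Q_n$); by the inductive hypothesis the ``block'' piece contributes $L_{N/2}$ — respectively $\sqrt2\,L_{N/2}$, since $\|\V I_2\otimes(A-A')\|_F=\sqrt2\,\|A-A'\|_F$ — times the $2$-norm of the remaining angles. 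Since the squared $2$-norm of the full angle vector splits as the top part plus the rest, Cauchy--Schwarz combines the two contributions into the recursion $L_N^2=c_{\mathrm{top}}^2+c_{\mathrm{block}}^2\,L_{N/2}^2$ with $(c_{\mathrm{top}},c_{\mathrm{block}})$ equal to $(\sqrt N,1)$, $(\sqrt2,1)$, $(\sqrt2,\sqrt2)$ respectively; starting from $L_2^2=2$ these geometric recursions telescope to $L_N^2=2(N-1)$, $2n$, $2(N-1)$.

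The hard part will be the simple scalar case: running the same recursive bookkeeping on $\B_s(N)$ accrues a spurious factor $\|R_\t\|_F=\sqrt2$ at each of the $n$ levels and only yields the weaker constant $\sqrt{nN}$, so the sharp $\sqrt N$ genuinely needs the closed-form evaluation of $\|B(\boldsymbol\t)-B(\boldsymbol\t')\|_F^2$ above — equivalently, the observation that the partials $\partial_{\t_k}B$ are pairwise $\langle\cdot,\cdot\rangle_F$-orthogonal (because $\langle R'_\t,R_\t\rangle_F=\trace(R_\t^T R'_\t)=0$) with common norm $\sqrt N$, so that $\|DB(\boldsymbol\t)\|_{2\to F}\equiv\sqrt N$ and the bound follows from the mean value inequality along the segment from $\boldsymbol\t$ to $\boldsymbol\t'$. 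For the remaining three families the only real care is bookkeeping: correctly charging the diagonal structure and the $\V I_2\otimes(\cdot)$ of the simple case to $c_{\mathrm{top}}$ and $c_{\mathrm{block}}$, and checking the recursions land on exactly the stated constants.
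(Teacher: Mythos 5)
Your proposal is correct: the exact computation $\|B(\boldsymbol\t)-B(\boldsymbol\t')\|_F^2=2N\bigl(1-\prod_{j}\cos(\t_j-\t_j')\bigr)$ combined with the inequality $1-\prod_j\cos\delta_j\le\tfrac12\sum_j\delta_j^2$ yields the sharp constant $\sqrt N$ for $\B_s(N)$ (where, as you note, the naive recursion would only give $\sqrt{nN}$), and your recursions $L_N^2=N+L_{N/2}^2$, $L_N^2=2+2L_{N/2}^2$, and $L_N^2=2+L_{N/2}^2$ with $L_2^2=2$ indeed telescope to $2(N-1)$, $2(N-1)$, and $2n$ for $\B(N)$, $\B_s^{(D)}(N)$, and $\B^{(D)}(N)$, matching the stated constants. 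This is essentially the paper's route of bounding $\|B(\boldsymbol\t)-B(\boldsymbol\t+\boldsymbol\vep)\|_F$ directly through the Kronecker and recursive block structure, so nothing further is needed.
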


In particular, this shows the butterfly models comprise connected manifolds of dimension $M$ for each respective type of butterfly matrices. A proof of \Cref{prop: lipschitz} is found in \Cref{sec: appendix}.

\subsection{GECP growth factor of butterfly matrices}

The growth factor is an important statistic that controls the precision when using GE to solve linear systems, as seen in \eqref{eq: gf ineq}. In particular, for well-conditioned linear systems, error analysis on GE can focus on analysis of the growth factor itself. The study of growth factors on random linear systems has its own rich history. 

% \subsubsection{Random growth factors}
The previous focus on worst-case behavior to gauge algorithm applications led earlier researchers to focus more on GECP when using GE, due to its sub-exponential worst-case upper bound, and hence the ongoing interest in the GECP growth problem. However, despite potentially exponential growth, typical GEPP growth remained much smaller in practice. Understanding why this is the case has remained a continued question of interest in numerical analysis. 

A good starting point on the study of random growth factors is the \textit{average-case analysis} of Schreiber and Trefethen in 1990 \cite{TrSc90}. Their analysis shifted the focus of the growth problem from worst-case behavior to instead look at the average behavior on different random matrix ensembles. Using different pivoting schemes, they presented empirical estimates of the first moment on iid random Gaussian ensembles of size up to $10^4$, showing GEPP typically maintained polynomial growth factors of order $n^{2/3}$ while GECP had growth of order $n^{1/2}$. Understanding the behavior of growth factors on fixed linear systems using additive Gaussian perturbations falls under the heading of \textit{smoothed analysis} (see, e.g., \cite{Spielman_Teng_2009} for a general overview of using smoothed analysis to analyze algorithms). A large step forward in understanding the overall behavior of GE was accomplished by Sankar, Speilman, and Teng who provided a full smoothed analysis of GENP in 2006 \cite{SST06}. However, the increasing complexity of pivoting prevented their methods from \red{extending to a full smoothed analysis of GEPP}. For instance, only partial smoothed analysis results of GEPP were  carried out by Sankar in his Ph.D. thesis \cite{S04}. Huang and Tikhomirov gave the most significant contribution to improving our understanding of GEPP in their improved average-case analysis using iid Gaussian matrices, where they provide high probability polynomial bounds on the growth factors \cite{HT23}. Again, a full smoothed analysis of GEPP remains out of reach still, as their methods only establish improved growth factor bounds on additive Gaussian perturbations of the all zeros matrix (compared to Sankar's previous work). Other recent work has explored dynamics between GEPP and GECP growth factors, as well as studying growth behavior using multiplicative random orthogonal perturbations \cite{p24_orth}.

While previous results using random growth factors focused on empirical moment estimates and bounds, the authors in \cite{PT23} take advantage of the structural properties of Haar-butterfly matrices to then provide full distributional descriptions for certain growth factors, including GENP, GEPP, and GERP. A similar approach was taken in \cite{EdUr23} for improved lower bounds on the GERP growth problem, where Kronecker products were further utilized as they preserve certain maximal growth properties. (However, this does not extend to GECP.)

We recall the statement of the theorem (cf. \cite[Theorem 2.3]{PT23}) that resolves the GENP, GEPP, and GERP growth problem for Haar-butterfly matrices:
\begin{theorem}[\cite{PT23}]\label{thm: gf gepp}
    Let $B \sim \B_s(N,\Sigma_S)$ and $X \sim \operatorname{Cauchy}(1)$. Then $\rho(B) \sim \prod_{j=1}^n (1 + Y_j^2)$ where $Y_j$ are iid, with $Y_j \sim |X|$ when using GENP and $Y_j \sim (|X| \mid |X| \le 1)$ when using GEPP or GERP. Moreover, using any pivoting scheme then $\rho(B) \ge \rho^{\operatorname{GEPP}}(B)$.
\end{theorem}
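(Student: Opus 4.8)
The argument should be driven by the tensor structure. By the mixed-product property, $B\sim\B_s(N,\Sigma_S)$ can be written $B=\bigotimes_{j=1}^n R_{\theta_j}$ with $\theta_1,\dots,\theta_n$ iid $\Uniform([0,2\pi))$, so $X_j:=\tan\theta_j$ are iid $\Cauchy(1)$ (as $\tan$ has period $\pi$) and $Y_j:=|X_j|$. The plan is to (i) work out Gaussian elimination on a single $2\times 2$ rotation under each pivoting rule; (ii) lift this to $\bigotimes_j R_{\theta_j}$ via compatibility of Kronecker products with triangular factorizations and with Schur complements of block-aligned leading minors; (iii) read off the product laws, handling GEPP and GERP through the reciprocal symmetry of the Cauchy law. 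Throughout I measure growth as $\rho(B)=\max_j|U_{jj}|/|U_{11}|$ as in the GECP discussion above, which agrees with $\max_k\|A^{(k)}\|_{\max}/\|B\|_{\max}$ whenever the first pivot is a global maximum, as it is for GECP, GERP, and GEPP on these matrices.

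\emph{Step (i).} Under GENP, $R_\theta=LU$ with $L$ unit lower triangular, $L_{21}=-\tan\theta$, and $U$ upper triangular with diagonal $(\cos\theta,\sec\theta)$; the largest entry over the two GE iterates is $|\sec\theta|$, and $\max_j|U_{jj}|/|U_{11}|=\sec^2\theta=1+\tan^2\theta$. Under GEPP — and under GERP and GECP, since the largest entry of $R_\theta$ is maximal in both its row and its column and lies in the first column — a row swap happens exactly when $|\sin\theta|>|\cos\theta|$; afterwards the multiplier has modulus $\min(|\tan\theta|,|\cot\theta|)\le 1$, the first pivot has modulus $\max(|\cos\theta|,|\sin\theta|)=\|R_\theta\|_{\max}$, the last pivot has modulus $1/\max(|\cos\theta|,|\sin\theta|)$, and the growth factor is $1+\min(|\tan\theta|,|\cot\theta|)^2$.

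\emph{Step (ii).} Since $L_1\otimes L_2$ is unit lower triangular, $U_1\otimes U_2$ is upper triangular, and $(L_1\otimes L_2)(U_1\otimes U_2)=(L_1U_1)\otimes(L_2U_2)$, uniqueness of $LU$ (carrying along perfect-shuffle permutations in the pivoted cases) shows that GENP on $\bigotimes_j R_{\theta_j}$ returns $\bigl(\bigotimes_j L_j\bigr)\bigl(\bigotimes_j U_j\bigr)$, so the pivots of $B$ are the Kronecker product of the single-factor pivot pairs; hence $\max_j|U_{jj}|$ and $|U_{11}|$ are the products over $j$ of, respectively, the larger pivot modulus and the first pivot modulus of $R_{\theta_j}$. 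That $\max_k\|A^{(k)}\|_{\max}$ is attained at a pivot then follows because each intermediate entry is a ratio of a $k\times k$ minor of $B$ to the leading $(k-1)\times(k-1)$ minor: the numerator is at most $1$ by orthogonality of $B$, while the denominator is a product of earlier pivots, each of modulus at least $\prod_j|\cos\theta_j|$, so every intermediate entry is bounded by $\prod_j|\sec\theta_j|$, the modulus of the last pivot. The step that needs care — and the main obstacle, I expect — is showing that the greedy GEPP/GERP/GECP pivot searches on $\bigotimes_j R_{\theta_j}$ reproduce the tensor-compatible permutation (and that the lexicographic rule resolves ties the same way), so the conclusions survive pivoting.

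\emph{Step (iii).} Granting (ii), $\rho^{\operatorname{GENP}}(B)=\prod_j(1+\tan^2\theta_j)=\prod_j(1+Y_j^2)$ with the $Y_j$ iid $|X|$, and $\rho^{\operatorname{GEPP}}(B)=\rho^{\operatorname{GERP}}(B)=\prod_j\bigl(1+\min(|\tan\theta_j|,|\cot\theta_j|)^2\bigr)$. Since $1/X_j\sim\Cauchy(1)$ as well and $x\mapsto 1/x$ interchanges $\{|x|\le 1\}$ with $\{|x|\ge 1\}$, the variable $\min(|\tan\theta_j|,|\cot\theta_j|)$ has the law of $(|X|\mid|X|\le 1)$; independence of the $\theta_j$ then yields the stated products. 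For the final assertion, GECP and GERP make exactly the GEPP swaps here, so they return $\rho^{\operatorname{GEPP}}(B)$; and for an arbitrary pivoting scheme one compares factor by factor, since on each $R_{\theta_j}$ the only other admissible pivot choice replaces $\min(|\tan\theta_j|,|\cot\theta_j|)$ by $\max(|\tan\theta_j|,|\cot\theta_j|)$ and so can only increase the growth, whence $\rho(B)\ge\rho^{\operatorname{GEPP}}(B)$. I would recheck this comparison against degenerate angles and tie-breaking, but it should cause no real trouble.
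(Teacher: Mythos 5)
Your overall plan---reduce to the $2\times 2$ rotation, lift through the Kronecker structure, and convert to the stated laws via the inversion symmetry of the Cauchy distribution---is the same route this paper attributes to \cite{PT23} (the theorem is recalled there, not reproved here), where the lifting is carried out through the explicit intermediate forms of \Cref{lemma:Bk}. The genuine gaps in your write-up sit exactly where that lemma does the work. First, the claim that the GEPP/GERP (and GECP) pivot searches on $\bigotimes_j R_{\theta_j}$ select the tensor-compatible permutation is left unproved; you flag it yourself as ``the main obstacle,'' but it is the heart of the theorem, and it is precisely what the inductive computation of $B^{(k)}$ in \Cref{lemma:Bk} supplies. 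Without it, the identification of the pivots of $B$ with Kronecker products of the $2\times2$ pivots, and hence all of Step (iii), is unsupported. Second, your argument that $\max_k\|A^{(k)}\|_{\max}$ is attained at a pivot is not valid: from ``each $k\times k$ minor of the orthogonal matrix $B$ has modulus at most $1$'' and ``each earlier pivot has modulus at least $\prod_j|\cos\theta_j|$'' you can only conclude that an intermediate entry is bounded by $\bigl(\prod_j|\sec\theta_j|\bigr)^{k-1}$, because the denominator is a product of $k-1$ pivots; and that denominator (a leading principal minor) can genuinely fall below $\prod_j|\cos\theta_j|$. Already with two angles the leading $2\times2$ principal minor is the square of the cosine of the outer angle, which is smaller than $|\cos\theta_1\cos\theta_2|$ whenever that cosine is the smaller of the two (and the same happens with $\max(|\cos\theta_j|,|\sin\theta_j|)$ in the pivoted case). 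So the minor-ratio shortcut cannot deliver the sharp bound; the correct control of the intermediate entries is obtained by tracking $B^{(k)}$ explicitly and inducting on the recursive block structure, exactly as in \Cref{lemma:Bk} and as this paper does again in \Cref{prop: gecp} for the GECP extension.

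The final assertion, $\rho(B)\ge\rho^{\operatorname{GEPP}}(B)$ for \emph{any} pivoting scheme, is also not established by your factor-by-factor comparison: an arbitrary scheme chooses row (and column) swaps on the full $N\times N$ matrix that need not respect the Kronecker structure, so there is no per-$R_{\theta_j}$ pivot choice to compare against. A correct argument must be uniform over all pivot sequences; for example, orthogonality of $B$ gives that every minor has modulus at most $1$ and that the $(N-1)\times(N-1)$ minors have the same moduli as the entries of $B$ (adjugate identity), so the final pivot of any pivoted factorization has modulus at least $1/\|B\|_{\max}$ and hence $\rho(B)\ge \|B\|_{\max}^{-2}$, which equals the GEPP value $\prod_j\bigl(1+\min(|\tan\theta_j|,|\cot\theta_j|)^2\bigr)$ once the structural lemma is in place. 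Your Steps (i) and (iii)---the $2\times2$ analysis and the Cauchy/reciprocal computation of the laws---are fine; what is missing is the structural content, and it cannot be patched by the minor bound or the factorwise comparison as proposed.
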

The proof of \Cref{thm: gf gepp} utilized $\tan \t \sim X$ and $\max(|\tan \t|,|\cos\t|) \sim (|X| \mid |X| \le 1)$ for $X \sim \operatorname{Cauchy}(1)$ when $\t \sim \Uniform([0,2\pi))$, in addition to certain maximizing pivoting properties among the intermediate GE forms of $B \sim \B_s(N,\Sigma_S)$, which can be derived from the following lemma:

\begin{lemma}[\cite{PT23}]
\label{lemma:Bk}
    Suppose $A \in \mathbb R^{N/2\times N/2}$ has an $LU$ factorization using GENP. Let
    $$
    B = \begin{bmatrix}
    \cos\theta A & \sin\theta A\\-\sin\theta A & \cos\theta A
    \end{bmatrix} = R_{\t} \otimes A
    $$
    for $\cos\theta \ne 0$. Then $B$ has an $LU$ factorization using GENP. Moreover, if $k \le N/2$, then
    \begin{equation}
    \label{eq:Bk first half}
        B^{(k)} = \begin{bmatrix}
        \cos\theta A^{(k)} & \sin\theta A^{(k)}\\
        -\sin\theta \begin{bmatrix}
        \V0&\V0\\\V0&\V I_{N/2-k+1}
        \end{bmatrix} A^{(k)}
        & \sec\theta \left(A - \sin^2\theta \begin{bmatrix}
        \V0&\V0\\ \V0&\V I_{N/2-k+1}
        \end{bmatrix}A^{(k)} \right)
        \end{bmatrix}.
    \end{equation}
    If $k=N/2+j$ for $j \ge 1$, then
    \begin{equation}
    \label{eq:Bk second half}
        B^{(k)} = \begin{bmatrix}
        \cos\theta A^{(N/2)} & \sin\theta A^{(N/2)}\\\V 0 &\sec\theta A^{(j)}
        \end{bmatrix}.
    \end{equation}
\end{lemma}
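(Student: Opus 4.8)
The plan is to prove \eqref{eq:Bk first half} and \eqref{eq:Bk second half} simultaneously by induction on the GENP step index $k$, tracking how the $2\times 2$ block structure of $B$ evolves under each elimination. The base case $k=1$ is immediate: the coordinate projector appearing in \eqref{eq:Bk first half} equals $\V I_{N/2}$ when $k=1$, so the lower-left block is $-\sin\theta\,A$ and the lower-right block is $\sec\theta(A-\sin^2\theta\,A)=\cos\theta\,A$ using $\cos^2\theta+\sin^2\theta=1$, recovering $B^{(1)}=B$. Along the way one records that every pivot is nonzero — it is $\cos\theta\,A^{(k)}_{kk}$ in the first half and $\sec\theta\,A^{(j)}_{jj}$ in the second, both nonzero since $A$ has an $LU$ factorization via GENP (so each $A^{(k)}_{kk}\neq0$) and $\cos\theta\neq0$ — which is precisely the assertion that $B$ has an $LU$ factorization via GENP.

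For the inductive step inside the first half, $1\le k\le N/2-1$, I would run GENP step $k$ on $B^{(k)}$ and split the rows below the pivot into three regimes; write $a^{(\ell)}_m$ for the $m$th row of $A^{(\ell)}$ and $a_m=a^{(1)}_m$. For rows $i$ with $k<i\le N/2$ the pivot entry and the subdiagonal entry both carry a factor $\cos\theta$, so the multiplier equals the GENP multiplier of $A$ at step $k$, and the upper-left and upper-right blocks each advance $A^{(k)}$ to $A^{(k+1)}$. For rows $N/2+m$ with $m<k$ column $k$ is already zero, since the projector kills the first $k-1$ rows of the lower-left block, so nothing changes. For rows $N/2+m$ with $k\le m\le N/2$ the subdiagonal entry is $-\sin\theta$ times entry $(m,k)$ of $A^{(k)}$, so the multiplier is $-\tan\theta$ times the $A$-multiplier; since the pivot row equals $\cos\theta\,a^{(k)}_k$ on columns $1,\dots,N/2$ and $\sin\theta\,a^{(k)}_k$ on columns $N/2+1,\dots,N$, the update carries $-\sin\theta\,a^{(k)}_m\mapsto-\sin\theta\,a^{(k+1)}_m$ in the lower-left block and $\sec\theta(a_m-\sin^2\theta\,a^{(k)}_m)\mapsto\sec\theta(a_m-\sin^2\theta\,a^{(k+1)}_m)$ in the lower-right block, the algebra hinging on $\tan\theta\sin\theta=\sec\theta\sin^2\theta$. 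The boundary index $m=k$ is checked separately: there the lower-left entry collapses to $0$ and the lower-right entry becomes $\sec\theta\,a_k$, exactly what \eqref{eq:Bk first half} predicts once its projector has shed one more row. Reassembling the three regimes gives $B^{(k+1)}$ in the form \eqref{eq:Bk first half}.

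The remaining two pieces are short. At the crossover $k=N/2$, the matrix $A^{(N/2)}$ is upper triangular (it is the $U$-factor of $A$), so the lower-left block of $B^{(N/2)}$ has a single nonzero entry, $-\sin\theta\,A^{(N/2)}_{N/2,N/2}$, at position $(N,N/2)$; eliminating it with multiplier $-\tan\theta$ zeroes the whole lower-left block and sends the last row of the lower-right block to $\sec\theta\,a_{N/2}$, so that block becomes $\sec\theta\,A=\sec\theta\,A^{(1)}$, which is \eqref{eq:Bk second half} at $j=1$. For $k=N/2+j$ with $1\le j\le N/2-1$ the pivot $\sec\theta\,A^{(j)}_{jj}$ is nonzero, the pivot row is supported only on columns $\ge N/2+1$, the lower-left block stays $\V0$, and the step reproduces GENP step $j$ applied to the scaled matrix $\sec\theta\,A^{(j)}$ (scaling leaves GENP multipliers unchanged), producing $\sec\theta\,A^{(j+1)}$ and advancing \eqref{eq:Bk second half}. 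Iterating to $k=N$ leaves $B^{(N)}$ upper triangular.

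The only genuine difficulty is the bookkeeping in the first-half step: keeping straight that the pivot row restricts to $\cos\theta$ times its $A^{(k)}$-row on the left half-columns and $\sin\theta$ times that row on the right half-columns, handling the boundary row $N/2+k$ on its own, and tracking that the identity block inside the projectors absorbs exactly one more zero row per step — precisely how the size $N/2-k+1$ drops to $N/2-k$. Beyond linearity of the elimination updates the only algebraic inputs are the Pythagorean identity, in the forms $\cos^2\theta+\sin^2\theta=1$ and $\tan\theta\sin\theta=\sec\theta\sin^2\theta$.
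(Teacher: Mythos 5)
Your induction on the GENP step $k$, with the three row regimes (rows $k<i\le N/2$, already-eliminated rows $N/2+m$ with $m<k$, and rows $N/2+m$ with $m\ge k$ using multiplier $-\tan\theta$ times the $A$-multiplier and the identity $\tan\theta\sin\theta=\sec\theta\sin^2\theta$), is correct and is essentially the same direct elimination-bookkeeping argument as the proof in the cited source \cite{PT23}; the crossover at $k=N/2$ and the second-half steps are handled correctly as well. The only hair to note is that the hypothesis guarantees $A^{(k)}_{kk}\ne 0$ only for $k\le N/2-1$, but if $A^{(N/2)}_{N/2,N/2}=0$ the sole entry below that pivot vanishes too, so the crossover step is vacuous and your formulas still hold.
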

In particular, \Cref{lemma:Bk} then yields 
\begin{equation}\label{eq: gf angle vec}
    \rho(B(\boldsymbol{\t})) = \prod_{j=1}^n \max(|\sec \t_j|,|\csc \t_j|)^2 = \prod_{j=1}^n (1 + \max(|\tan \t_j|,|\cot\t_j|)^2)
\end{equation}
for $B(\boldsymbol{\t}) \in \B_s(N)$ if using GEPP (or GERP). It follows then this product is invariant under any permutation of the indices for the input angles. \red{Let $S_n$ denote the symmetric group of permutations of length $n$.}
\begin{corollary}\label{cor: gf inv sym}
    Let $B(\boldsymbol{\t}) \in \B_s(N)$ and $\sigma \in S_n$. If $\tilde \t_j = \t_{\sigma(j)}$ \red{for each $j = 1,2,\ldots,n$}, then $\rho(B(\boldsymbol{\t})) = \rho(B(\tilde{\boldsymbol{\t}}))$.
\end{corollary}

\begin{figure}[t] 
\centering
    \subfloat[$P$]{%
        \includegraphics[width=0.3\textwidth]{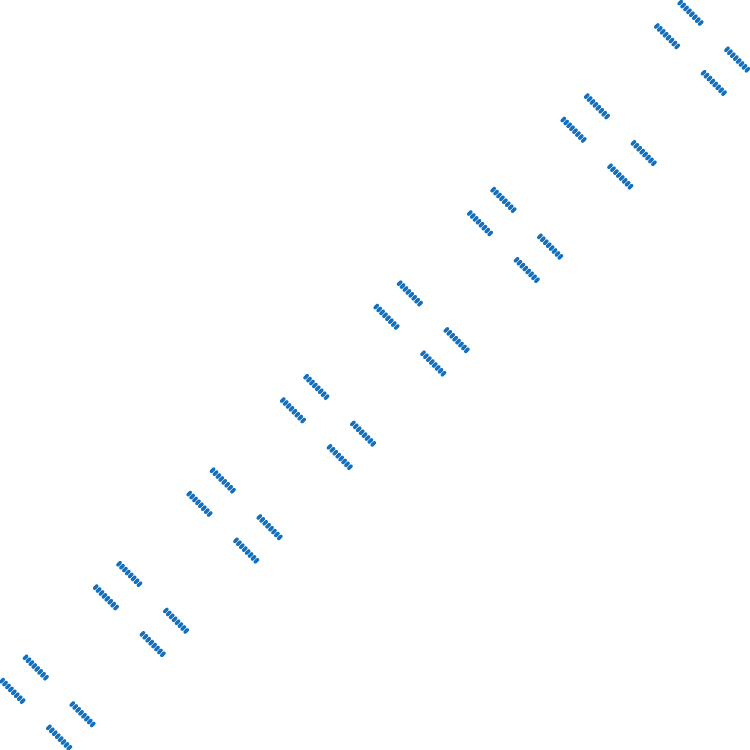}%
        }%
    \ % \hspace{.5pc}%
    \subfloat[$L+U$]{%
        \includegraphics[width=0.3\textwidth]{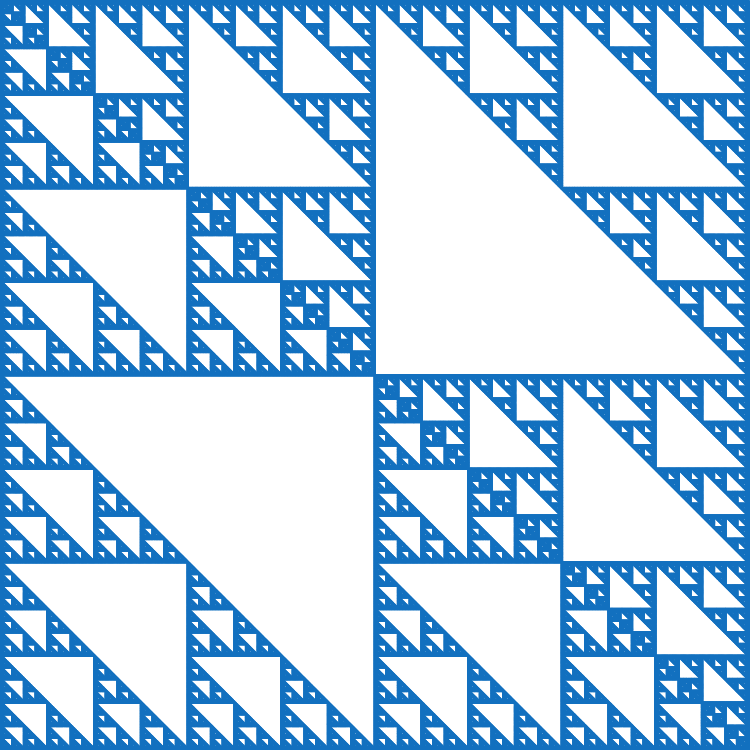}%
        }%
    % \hspace{.5pc}%
    \ \subfloat[$Q$]{%
        \includegraphics[width=0.3\textwidth]{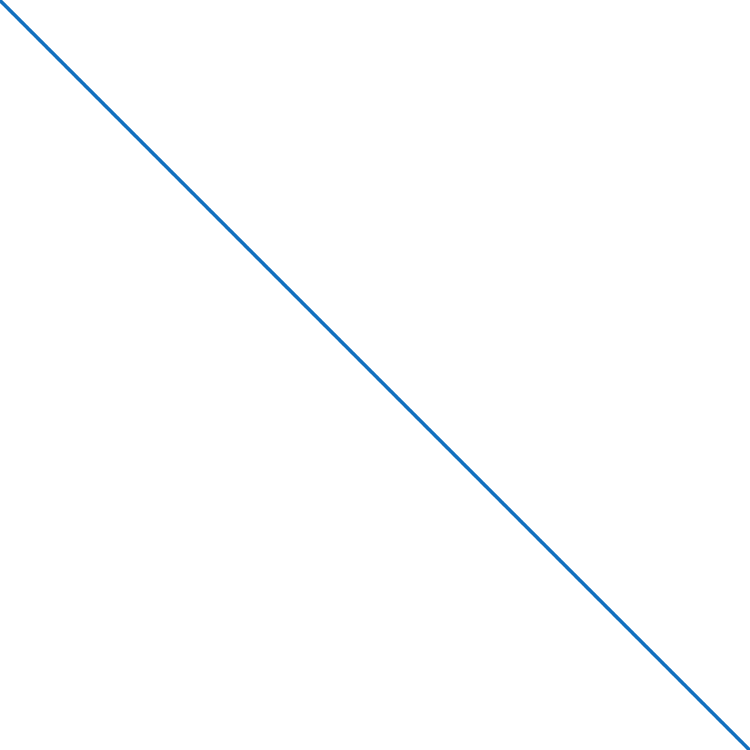}%
        }%
    % \hspace{.5pc}%
\caption{Sparsity patterns for (computed, within the given tolerance $\operatorname{tol} = 10^{\red 3} \cdot \epsilon_{\operatorname{machine}}$) matrix factors (a) $P$, (b) $L+U$, and (c) $Q$ from then GECP factorization $PBQ = LU$ for $B = B(\tilde{\boldsymbol{\t}}) \in \B_s(N)$ for $N = 2^{10}$ and $\boldsymbol{\t}\sim \Uniform([0,2\pi)^{10})$.}
\label{fig: lex CP}%
\end{figure}

Our goal is now to extend \Cref{thm: gf gepp} to include a subclass of butterfly matrices when using also GECP. In particular, we will introduce a class of matrices that  do not need any pivoting when using GECP, i.e., they are \textit{completely pivoted}:
\begin{theorem}\label{thm: new}
    Let $B(\boldsymbol{\t}) \in \B_s(N)$. If $\boldsymbol{\t} \in [0,2\pi)^n$ such that
    \begin{equation}\label{eq: thm 1st cond}
        |\tan\t_{j+1}| \le |\tan \t_j| \le 1
    \end{equation}
    for each $j$, then the GENP, GEPP, GERP, and GECP factorizations of $B({\boldsymbol{\t}})$ all align, i.e, $B(\boldsymbol{\t})$ is completely pivoted. In particular, then the growth factor takes the form \eqref{eq: gf angle vec}. Moreover, if $\boldsymbol{\t} \sim \Uniform([0,2\pi)^n)$ and $\sigma \in S_n$ is such that $\tilde \t_j = \t_{\sigma(j)}$ where
    \begin{equation}\label{eq: thm 2nd cond}
        \max(|\tan\tilde \t_{j+1}|,|\cot\tilde \t_{j+1}|) \le \max(|\tan \tilde \t_j|,|\cot \tilde \t_j|),
    \end{equation}
    then the GEPP, GERP, and GECP factorizations of $B(\tilde{\boldsymbol{\t}})$ all align and $\rho(B(\tilde{\boldsymbol{\t}}))$ has distribution determined by \Cref{thm: gf gepp}.
\end{theorem}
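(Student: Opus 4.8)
The plan is to prove the deterministic assertion under \eqref{eq: thm 1st cond} by induction on $n$ and then deduce the randomized reordering statement from it. Write $B(\boldsymbol{\t})$ via \eqref{eq:bm_def} as $R_{\t_n}\otimes A$ with $A=B(\t_1,\dots,\t_{n-1})\in\B_s(N/2)$ the sub‑butterfly matrix on the remaining angles; the inner angle vector again satisfies \eqref{eq: thm 1st cond}, and under \eqref{eq: thm 1st cond} the outermost angle $\t_n$ realizes $\min_j|\tan\t_j|$, hence $\min_j|\sec\t_j|$. The base case $n=1$ is a direct check on $R_{\t_1}$. For the inductive step I feed the explicit GENP intermediate forms of \Cref{lemma:Bk} into the GECP pivot search. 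On GE steps $1\le k\le N/2$, \eqref{eq:Bk first half} presents $B^{(k)}$ as a $2\times2$ block matrix whose candidate pivot is $\cos\t_n\,A^{(k)}_{kk}$; since $A^{(k)}$ vanishes below its first $k-1$ diagonal entries, the top‑left, top‑right and bottom‑left blocks of the untriangularized part are $\cos\t_n$, $\sin\t_n$ and $-\sin\t_n$ times a zero‑padding of the untriangularized part of $A^{(k)}$, so — using that $A$ is completely pivoted by the inductive hypothesis and $|\tan\t_n|\le1$ — each is bounded in magnitude by $|\cos\t_n\,A^{(k)}_{kk}|$, and the column‑major lexicographic tie‑break selects $(k,k)$ with no row or column swap. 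On GE steps $N/2<k\le N$, \eqref{eq:Bk second half} shows the untriangularized part of $B^{(k)}$ is exactly $\sec\t_n$ times that of $A^{(k-N/2)}$, so the second half of GECP on $B$ is GECP on $A$ rescaled, i.e.\ GENP on $A$ by induction; hence the GECP and GENP factorizations of $B$ coincide. Since GEPP and GERP also coincide with GENP whenever every $|\tan\t_j|\le1$ (no row pivot is triggered, as in the proof of \Cref{thm: gf gepp}), all four factorizations agree, and the growth formula \eqref{eq: gf angle vec} is read off from the diagonal of $U$ in \Cref{lemma:Bk}.

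The one genuinely delicate step — and the main obstacle — is the bottom‑right block of $B^{(k)}$ for $k\le N/2$, namely $\sec\t_n\bigl(A-\sin^2\t_n\,D_kA^{(k)}\bigr)$ with $D_k$ the projection onto the last $N/2-k+1$ coordinates; I must show $\|A-\sin^2\t_n\,D_kA^{(k)}\|_{\max}\le\cos^2\t_n\,|U^A_{kk}|$, so that this block too is dominated by $|\cos\t_n\,A^{(k)}_{kk}|$. Writing $A^{(k)}=\widehat L_kA$ with $\widehat L_k$ the unit lower triangular product of the first $k-1$ elimination matrices of $A$, the rows of $A-\sin^2\t_n\,D_kA^{(k)}$ below index $k$ are rows of $A$, while those of index at least $k$ equal $\cos^2\t_nA_{i,:}-\sin^2\t_n\sum_{m<k}(\widehat L_k)_{im}A_{m,:}$. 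The bound follows from three facts, each a consequence of the butterfly structure and the full monotonicity \eqref{eq: thm 1st cond}: (i) $\|A\|_{\max}=\prod_{j<n}|\cos\t_j|$, since monotonicity forces $|\sin\t_j|\le|\cos\t_j|$ for every inner angle; (ii) for $k\ge2$, $|U^A_{kk}|$ equals $\|A\|_{\max}$ times a nonempty product of factors $\sec^2\t_\ell$, $\ell<n$, indexed by the nonzero binary digits of $k-1$, and each such $\sec^2\t_\ell\ge\sec^2\t_n$, so $|U^A_{kk}|\ge(1+|S|\tan^2\t_n)\|A\|_{\max}$ where $|S|$ is the number of those digits; and (iii) a combinatorial bound $\sum_{m<k}|(\widehat L_k)_{im}|\le|S|$ on the accumulated GE multipliers of a completely pivoted butterfly matrix, again dictated by the dyadic recursion. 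Together these absorb the $\sin^2\t_n$‑weighted contributions of the rows above row $k$ into the slack $\cos^2\t_n\bigl(|U^A_{kk}|-\|A\|_{\max}\bigr)$. I expect the cleanest execution is not the entrywise triangle inequality sketched here but a secondary induction re‑expressing $A-\sin^2\t_n\,D_kA^{(k)}$, via \Cref{lemma:Bk} one recursion level deeper, as a partially rescaled butterfly array whose entries are compared exactly (this is presumably isolated as \Cref{prop: gecp}); it is essential to use the full monotonicity \eqref{eq: thm 1st cond} and not merely $|\tan\t_j|\le1$.

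For the randomized statement I argue by a parallel induction, now allowing the outermost angle $\vartheta$ to have $|\tan\vartheta|>1$. Almost surely the reordering $\sigma$ is well defined and no $|\tan\t_j|$ equals $1$ or has $\cos\t_j=0$. At a recursion level with $|\tan\vartheta|\le1$ one proceeds as in the first part; at a level with $|\tan\vartheta|>1$, \Cref{lemma:Bk} together with the elementary but crucial fact that every diagonal entry of a scalar simple butterfly matrix equals $\prod\cos\t_\ell$ (so its max‑norm is attained exactly on the diagonal once all $|\tan\t_\ell|\le1$) forces the column‑major lexicographic tie‑break of GECP to perform precisely the row interchange — and no column interchange — that GEPP performs, namely the half‑swap turning $R_\vartheta\otimes(\cdot)$ into its row‑exchanged form, which up to a harmless column sign is $R_{\vartheta'}\otimes(\cdot)$ with $|\tan\vartheta'|=|\cot\vartheta|\le1$. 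Because $\max(|\tan\t|,|\cot\t|)=1/\min(|\tan\t|,|\cot\t|)$, condition \eqref{eq: thm 2nd cond} is exactly what renders the reduced angles $\vartheta'$ encountered along this process monotone in the way demanded by \eqref{eq: thm 1st cond}, so that the first part applies at every subsequent stage and no further swap of either kind is ever forced; hence the GEPP, GERP and GECP factorizations of $B(\tilde{\boldsymbol{\t}})$ all agree. Finally $\rho(B(\tilde{\boldsymbol{\t}}))=\rho^{\operatorname{GEPP}}(B(\tilde{\boldsymbol{\t}}))$, which equals $\rho^{\operatorname{GEPP}}(B({\boldsymbol{\t}}))$ by \Cref{cor: gf inv sym} since $\tilde{\boldsymbol{\t}}$ permutes ${\boldsymbol{\t}}$, and \Cref{thm: gf gepp} gives the law of this as $\prod_{j=1}^n(1+Y_j^2)$ with the $Y_j$ iid $(|X|\mid|X|\le1)$, $X\sim\operatorname{Cauchy}(1)$.
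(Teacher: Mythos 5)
Your overall architecture is the same as the paper's: induct on $n$ via the block forms of \Cref{lemma:Bk}, observe that for $k>N/2$ and for the three ``easy'' blocks when $k\le N/2$ everything follows from complete pivoting of $A$ and $|\tan\theta_n|\le 1$, and isolate the bottom-right block, where one needs $\|A-\sin^2\theta_n\,D_kA^{(k)}\|_{\max}\le\cos^2\theta_n|U'_{kk}|$. The paper closes exactly this step by \emph{strengthening the inductive statement}: \Cref{prop: gecp} bounds the trailing block of $\eta B-\varepsilon B^{(k)}$ by $|\eta-\varepsilon||U_{kk}|$ for all $|\varepsilon|\le|\eta-\varepsilon|$, and this two-parameter family is stable under the recursion (the delicate block is again of the form $\eta'A-\varepsilon'A^{(k)}$ with $\eta'=\varepsilon-\eta\cos^2\theta_n$, $\varepsilon'=\varepsilon\sin^2\theta_n$, and $|\varepsilon'|\le|\eta'-\varepsilon'|$ precisely because $\sin^2\theta_n\le\cos^2\theta_n$), together with the monotonicity bound $|\sec\theta_n|\,\|A\|_{\max}\le|U_{kk}|$. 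You recognize that some strengthened ``secondary induction'' is needed, but you never formulate it; what you offer instead is the entrywise argument via facts (i)--(iii), and that argument does not close. Fact (iii) is false: writing $A^{(k)}=\widehat L_kA$, the strictly lower part of $\widehat L_k$ is $-L_{21}L_{11}^{-1}$, whose rows have $\ell_1$-mass equal to a sum of products of $|\tan\theta_\ell|$ over the \emph{nonempty subsets} of $S$ (up to $2^{|S|}-1$ terms), not at most $|S|$. Concretely, for $A=R_{\theta_2}\otimes R_{\theta_1}$ and $k=4$ the relevant row is $(\tan\theta_1\tan\theta_2,\ \tan\theta_2,\ \tan\theta_1)$, with $\ell_1$-norm $|\tan\theta_1|+|\tan\theta_2|+|\tan\theta_1\tan\theta_2|$, which exceeds $|S|=2$ for tangents near $1$ and equals $3$ at the Hadamard angles $\theta_1=\theta_2=\pi/4$ (allowed by \eqref{eq: thm 1st cond}). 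Paired with your linearized (ii), $|U'_{kk}|\ge(1+|S|\tan^2\theta_n)\|A\|_{\max}$, the claimed absorption gives $\cos^2\theta_n\|A\|_{\max}+3\sin^2\theta_n\|A\|_{\max}$ against a budget of $\cos^2\theta_n\|A\|_{\max}+2\sin^2\theta_n\|A\|_{\max}$, i.e.\ it fails whenever $\sin\theta_n\ne0$. (The true inequality survives in this example only because $|U'_{kk}|=\sec^2\theta_1\sec^2\theta_2\|A\|_{\max}$ is much larger than your bound; repairing your route would require the exact subset-product formulas for both $\widehat L_k$ and $U'_{kk}$ and a nontrivial summed comparison, none of which you carry out.)

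So the central technical step of the theorem is missing: either the corrected combinatorial lemma for $\widehat L_k$ or, as in the paper, the $(\eta,\varepsilon)$-strengthened induction of \Cref{prop: gecp} must be stated and proved. The remainder of your write-up is fine and parallels the paper: the treatment of steps $k>N/2$, the easy blocks, the reduction of the randomized statement to the deterministic one by applying the GEPP butterfly permutation (absorbing the block-row swap into a sign change and replacing $\vartheta$ by $\pi/2-\vartheta$, with \eqref{eq: thm 2nd cond} restoring \eqref{eq: thm 1st cond}), and the distributional conclusion via \Cref{cor: gf inv sym} and \Cref{thm: gf gepp}.
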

% The proof of \Cref{thm: new} will be postponed until \Cref{sec: proof theorem}. 

\Cref{fig: lex CP} shows plots of the sparsity patterns for the (computed) factors $P$, $L+U$, and $Q$ where $PBQ = LU$ is the GECP factorization for $B = B(\tilde{\boldsymbol{\t}}) \in \B_s(N)$ with $N = 2^{10}$, using $\boldsymbol{\t}\sim \Uniform([0,2\pi)^n)$. Note the $L$ and $U$ factors then have sparsity patterns determined by the Kronecker factor forms of $L$ and $U$, that then have nonzero entry locations aligning for $L$ and $U^\top$ in line with the Sierpi\'nski triangle, determined by $\bigotimes^n \begin{bmatrix}1 & 0 \\ 1 & 1\end{bmatrix}$; this follows directly from the mixed-product property and the $LU$ factorization
\begin{equation}\label{eq: LU rotation}
    R_\theta = \begin{bmatrix}
        1 & 0\\-\tan \t & 1
    \end{bmatrix} \begin{bmatrix}
        \cos \t & \sin \t \\ 0 & \sec \t
    \end{bmatrix} := L_\t U_\t.
\end{equation}
% The $\max(|\tan \t|,|\cot \t|)$ terms in \Cref{thm: new} arise in terms of the corresponding Kronecker matrix factors from the overall GEPP factorization along with the mixed-product property (noting further $B(\boldsymbol{\t}) \in \B_s(N)$ has GEPP factorization determined by the Kronecker product of the individual GEPP factorizations for each Kronecker factor). Furthermore, note $|\tan \t| \ge 1$ is equivalent to no GEPP pivot movement being needed with $R_\t$ since $|\cos \t| \ge |\sin \t|$, while $|\tan \t| < 1$ (or $|\cot \t| > 1$) aligns with the case when a GEPP pivot movement is needed with $R_\t$. Moreover, the map of $Q$ in \Cref{fig: lex CP} aligns with the identity matrix, which shows no \textit{column} pivots were needed, while $P$ aligns with the associated simple butterfly permutation that would arise using GEPP on $B$. Other properties of simple butterfly permutations, including those that determine the number of GEPP pivot movements needed with $B \sim \B_s(N,\Sigma_S)$, are studied more extensively in \cite{P24}, \red{while uniform butterfly permutations that are the GEPP induced permutations for $\B(N,\Sigma_S)$ are studied in \cite{peca2024longest}}. Future work will additionally study other properties of the sparsity patterns of $P,Q$ in the context of permuton theory (cf. \cite{Borga21}).
The $\max(|\tan \theta|,|\cot \theta|)$ terms in \Cref{thm: new} arise from the corresponding Kronecker factors in the GEPP factorization, together with the mixed-product property. In particular, since $B(\boldsymbol{\theta}) \in \B_s(N)$, its GEPP factorization is given by the Kronecker product of the GEPP factorizations of its constituent factors. Moreover, the condition $|\tan \theta| \ge 1$ (equivalently, $|\cos \theta| \le |\sin \theta|$) corresponds to the case where no GEPP pivot movement is required for $R_\theta$, while $|\tan \theta| < 1$ (equivalently, $|\cot \theta| > 1$) corresponds to when a pivot movement occurs.

Additionally, the map of $Q$ in \Cref{fig: lex CP} is the identity, indicating that no column pivots are required, while $P$ coincides with the simple butterfly permutation arising from GEPP applied to $B$. Properties of simple butterfly permutations, including those governing the number of GEPP pivot movements for matrices $B \sim \B_s(N,\Sigma_S)$, are studied in \cite{P24}, while uniform butterfly permutations --- corresponding to the GEPP-induced permutations for $\B(N,\Sigma_S)$ --- are analyzed in \cite{peca2024longest}. Future work will further investigate structural properties of the sparsity patterns of $P$ and $Q$, particularly from the perspective of permuton theory (cf. \cite{Borga21}).

\begin{figure}[t] 
\centering
    \subfloat[$P$]{%
        \includegraphics[width=0.3\textwidth]{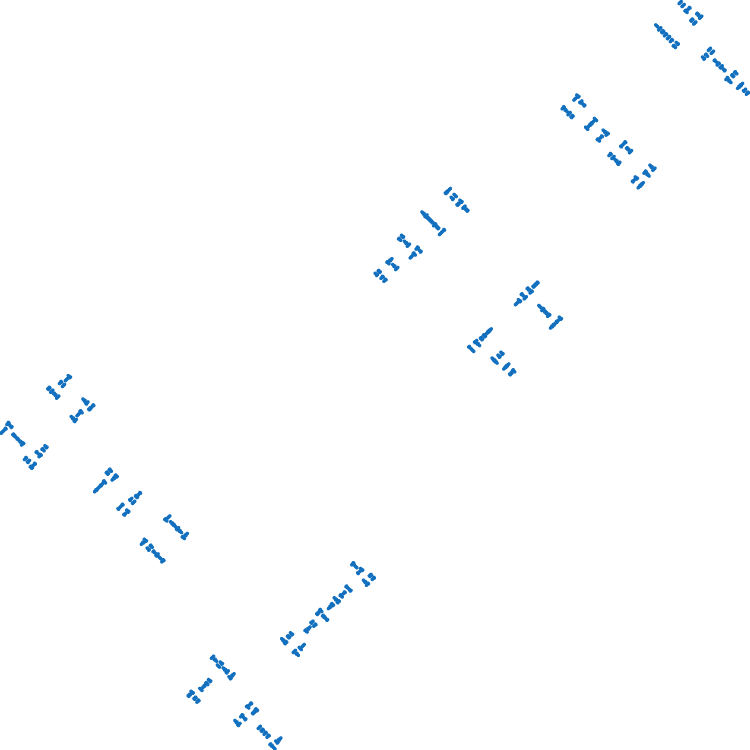}%
        }%
    \ % \hspace{.5pc}%
    \subfloat[$L+U$]{%
        \includegraphics[width=0.3\textwidth]{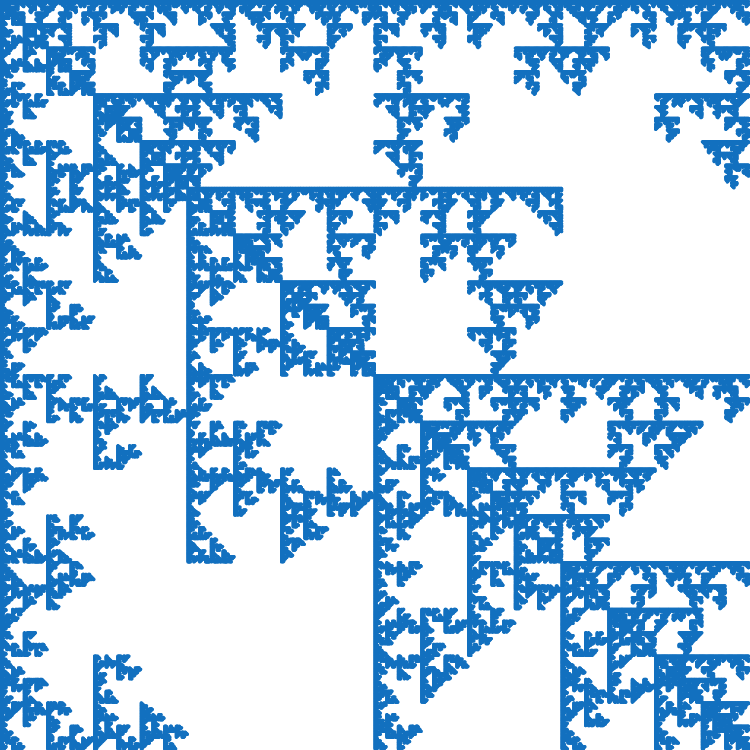}%
        }%
    % \hspace{.5pc}%
    \ \subfloat[$Q$]{%
        \includegraphics[width=0.3\textwidth]{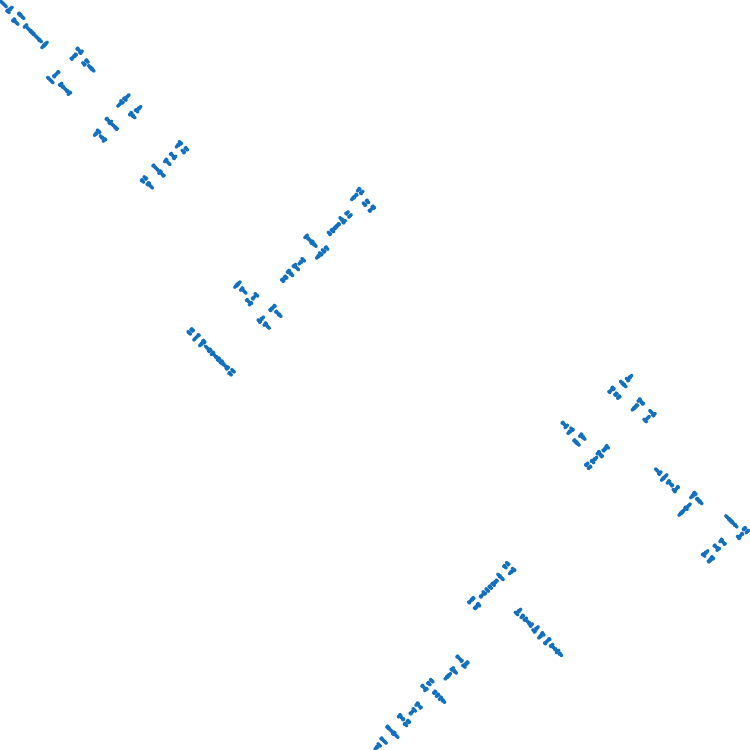}%
        }%
    % \hspace{.5pc}%
\caption{Sparsity patterns for the computed GECP factors of $B$ from \Cref{fig: lex CP}, now using GECP without an added tolerance parameter to identify potential candidates during each intermediate pivot search.}
\label{fig: lex CP notol}%
\end{figure}

\begin{remark}
    \Cref{fig: lex CP} used an additional tolerance level of $\operatorname{tol} = 10^{\red 3} \cdot \epsilon_{\operatorname{machine}}$ to determine the nonzero entries from $L + U$. This tolerance was also used for a custom MATLAB GECP algorithm to ensure the lexicographic column-major ordering was enforced, after first determining all possible pivot candidates within this tolerance level of the computed overall maximal entry in the remaining untriangularized block. \Cref{fig: lex CP notol} shows the sparsity patterns of the same $B$ from \Cref{fig: lex CP}, but now using a GECP algorithm without the added tolerance to determine the full list of potential pivot candidates. This in particular highlights how the floating-point arithmetic errors accumulate in such a way so that the computed ``maximal'' entry then manifests as a ``random'' candidate among what would be potential pivot candidates. Of note, although the overall sparsity pattern was not preserved without the tolerance parameter, the actual sparsity (i.e., proportion of zero entries, which still uses the tolerance to distinguish what a ``zero'' entry is) does remain intact. Additionally, the $L$ and $U^\top$ factors as well appear to maintain a symmetric sparsity pattern (as is confirmed at least with our sample since $L$ and $U^\top$ had entries with magnitude larger than the tolerance in the exact same locations).
\end{remark}

To prove \Cref{thm: new}, we will prove the following proposition that encapsulates the maximizing growth properties of the particular matrices considered in \Cref{thm: new}. This is an extension and update to a similar proposition used in \cite{PT23} to establish GEPP (and GERP) butterfly maximizing growth properties.

\begin{proposition}
\label{prop: gecp}
Let $B = B(\boldsymbol \t)  \in \B_s(N)$ such that $|\tan\t_{i+1}| \le |\tan\t_i| \le 1$ for all $i$, and let  $B = LU$ be the $LU$ factorization of $B$ using GENP. Let $\eta,\varepsilon \in \mathbb R$ such that %$|\eta| \le 1$ and 
$|\varepsilon| \le |\eta - \varepsilon|$. Then for all $k$
\begin{equation}
\label{eq: gecp max entry}
    \left\| \begin{bmatrix}
    \V 0 & \V I_{N-k+1}
    \end{bmatrix}(\eta B - \varepsilon B^{(k)})\begin{bmatrix}
    \V 0 \\ \V I_{N-k+1}
    \end{bmatrix} \right\|_{\max} \le |\eta - \varepsilon||U_{kk}|.
\end{equation}
In particular, then $B = LU$ is also the $LU$ factorization of $B$ using GECP, i.e., \Cref{thm: new} holds.
\end{proposition}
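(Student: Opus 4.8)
The plan is to leverage the recursive Kronecker structure of $B = B(\boldsymbol{\t}) \in \B_s(N)$ together with \Cref{lemma:Bk}, arguing by induction on $n$ (where $N = 2^n$). Write $B = R_{\t_1} \otimes A$ with $A = B(\t_2,\dots,\t_n) \in \B_s(N/2)$; by hypothesis $|\tan\t_{i+1}| \le |\tan\t_i| \le 1$, so in particular $|\cos\t_1| \ge |\sin\t_1|$ and the angle vector for $A$ still satisfies the monotonicity condition. The base case $n=0$ (or $n=1$) is a direct check using the $LU$ factorization \eqref{eq: LU rotation}. For the inductive step, I would split into the two regimes of \Cref{lemma:Bk}: $k \le N/2$, where $B^{(k)}$ is given by \eqref{eq:Bk first half}, and $k = N/2 + j$ with $j \ge 1$, where $B^{(k)}$ is given by \eqref{eq:Bk second half}. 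In the second regime the trailing $(N-k+1)\times(N-k+1)$ block of $\eta B - \vep B^{(k)}$ is $\eta \sec\t_1 A^{(j)}$ restricted to its trailing block minus $\vep\sec\t_1 A^{(j)}$ restricted similarly — wait, more carefully: the trailing block of $B$ itself there is $\sec\t_1$ times (a trailing block of) $A$ after the GENP updates, so I need to track that $\big[\begin{smallmatrix}\V0 & \V I\end{smallmatrix}\big] B \big[\begin{smallmatrix}\V0\\\V I\end{smallmatrix}\big]$ equals $\sec\t_1$ times the corresponding trailing block of $A$ minus lower-order GENP fill terms; the key point is that \eqref{eq:Bk second half} exhibits the trailing block as $\sec\t_1 A^{(j)}$, so the inequality \eqref{eq: gecp max entry} for $B$ at step $N/2+j$ reduces, after factoring out $|\sec\t_1|$, to the same inequality for $A$ at step $j$ with the same $\eta,\vep$ — which holds by the inductive hypothesis, and $|U_{N/2+j,N/2+j}(B)| = |\sec\t_1|\,|U_{jj}(A)|$.

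The first regime, $k \le N/2$, is where the real work lies. Here the trailing block of $\eta B - \vep B^{(k)}$ is itself a $2\times 2$ block-array whose four blocks are built from $\cos\t_1 A$, $\sin\t_1 A$, and the combinations $-\sin\t_1 (\text{lower block of }A^{(k)})$ and $\sec\t_1(A - \sin^2\t_1\,(\text{lower block of }A^{(k)}))$, all restricted to appropriate index sets. I would expand $\|\cdot\|_{\max}$ over these four blocks. The top-left block contributes entries of the form $(\eta-\vep)\cos\t_1 A_{pq}$ or $\eta\cos\t_1 A_{pq}$ depending on whether the row/column index lies in the already-eliminated range; the top-right and bottom-left blocks contribute $\sin\t_1$ times analogous entries; and the bottom-right block, after simplification using $\cos^2 + \sin^2 = 1$, contributes entries of the form $\sec\t_1(\eta - \vep\sin^2\t_1)$ times a trailing block of $A$ minus correction terms, or rewritten so that a factor $(\eta' - \vep')$ with $|\vep'| \le |\eta' - \vep'|$ appears. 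The target bound is $|\eta-\vep|\,|U_{kk}(B)| = |\eta - \vep|\,|\cos\t_1|\,|U_{kk}(A)|$ (reading off the Kronecker form of $U$ via \eqref{eq: LU rotation} and the mixed-product property). So I want to show each of the four blocks has max-norm at most $|\eta-\vep|\,|\cos\t_1|\,|U_{kk}(A)|$. For the blocks carrying a $\cos\t_1$ or $\sin\t_1$ factor, since $|\sin\t_1| \le |\cos\t_1|$ it suffices to bound the relevant sub-block of $\eta A - \vep A^{(k)}$ (or $\eta A$ itself, treating the un-eliminated columns as the $\vep=0$ case) in max-norm by $|\eta-\vep|\,|U_{kk}(A)|$ — which is exactly the inductive hypothesis for $A$ (or a trivial variant of it). For the bottom-right block I expect to need to re-derive, from \Cref{lemma:Bk} applied one level down, that its entries can be written as $(\tilde\eta - \tilde\vep)$ times entries of $A^{(k)}$ with $|\tilde\vep| \le |\tilde\eta - \tilde\vep|$ and $|\tilde\eta - \tilde\vep| \le |\eta - \vep|\cos^2\t_1$ — the extra $\cos^2\t_1$ compensating the $\sec\t_1$ prefactor, leaving the target $|\eta-\vep|\,|\cos\t_1|\,|U_{kk}(A)|$ after one more application of the inductive hypothesis.

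The main obstacle I anticipate is the bottom-right block bookkeeping: verifying that the scalar pair $(\tilde\eta,\tilde\vep)$ arising there genuinely satisfies $|\tilde\vep| \le |\tilde\eta - \tilde\vep|$ so that the inductive hypothesis is applicable, and tracking the $\cos^2\t_1$ factor correctly against the $\sec\t_1$. This is exactly the place where the hypothesis $|\tan\t_1| \le 1$ (equivalently $\cos^2\t_1 \ge \tfrac12$, and $|\sin\t_1|\le|\cos\t_1|$) gets used in an essential way, since it is what lets the $\sin^2\t_1$-weighted correction term be absorbed without spoiling the sign condition on $(\tilde\eta,\tilde\vep)$. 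Once \eqref{eq: gecp max entry} is established for all $k$, the "in particular" conclusion is immediate: taking $\eta=1,\vep=1$ (so $\eta-\vep=0$... no — rather) taking $\eta = 0$, $\vep = -1$, i.e. comparing $B^{(k)}$ against the zero matrix, the left side of \eqref{eq: gecp max entry} becomes $\|(\text{trailing block of }B^{(k)})\|_{\max}$ and the bound says this is at most $|U_{kk}|$, with equality attained at the $(k,k)$ entry; hence the GECP pivot search at every step finds its maximum already on the diagonal with no row or column swap needed, so GECP produces the same $LU$ as GENP, and since GEPP/GERP are intermediate between GENP and GECP in their pivot freedom they coincide too, giving \Cref{thm: new}.
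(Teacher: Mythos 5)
Your plan has the right skeleton, and it is essentially the paper's: induction on $n$, peeling off a coarse $2\times2$ rotation factor and invoking \Cref{lemma:Bk}, splitting into the regimes $k\le N/2$ and $k=N/2+j$, and recycling the side condition $|\vep|\le|\eta-\vep|$ through a new scalar pair $(\eta',\vep')=(\vep-\eta\cos^2\theta,\vep\sin^2\theta)$ (this is exactly the paper's \Cref{lemma: eta eps ineq}), with your closing choice $\eta=0$, $\vep=-1$ equivalent to the paper's $\eta=0$, $\vep=1$. The genuine gap is the orientation of the peeling. Under the paper's recursive construction the coarse $2\times2$ Kronecker factor of $B(\boldsymbol\theta)\in\B_s(N)$ is $R_{\theta_n}$, so the hypothesis $|\tan\theta_{j+1}|\le|\tan\theta_j|\le1$ places the angle with the \emph{smallest} tangent (largest cosine) on the outside; the paper's proof peels $\theta=\theta_n$. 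You instead write $B=R_{\theta_1}\otimes B(\theta_2,\dots,\theta_n)$, putting the largest-tangent angle outside. Either this identity is simply not a valid decomposition of the paper's $B(\boldsymbol\theta)$, or, if you relabel conventions so that it is, the statement your induction would establish is false — this is precisely the asymmetry the paper points out after \Cref{prop: torn} ($H\otimes A$ need not be completely pivoted even when $A$ is), and it is also why the cross-level ordering must enter the $k\le N/2$ case, whereas your sketch only ever invokes $|\tan\theta_1|\le1$ for the peeled angle.

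Concretely, the inductive claim in your orientation already fails at $N=4$: take outer angle $\varphi=\pi/4$ and inner angle $\psi$ with $\tan\psi=1/10$, so $B=R_\varphi\otimes R_\psi$ satisfies your hypotheses. Then $U_{22}=\cos\varphi\sec\psi\approx0.71$, while \eqref{eq:Bk first half} shows the trailing submatrix of $B^{(2)}$ contains the entry $\sec\varphi\cos\psi=\sqrt2\cos\psi\approx1.41$ at position $(3,3)$ — it sits in exactly the $\sec\theta$-scaled bottom-right block you flagged as the main obstacle — so \eqref{eq: gecp max entry} fails with $\eta=0$, $\vep=1$ and this matrix is not completely pivoted. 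The cure is to peel from the other end, $B=R_{\theta_n}\otimes B(\theta_1,\dots,\theta_{n-1})$, as the paper does: in the $k\le N/2$ case the dangerous terms reduce to $|\sec\theta_n|\,\|A\|_{\max}$ and $|\eta\sin\theta_n|\,\|A\|_{\max}$, and it is precisely the monotonicity $|\cos\theta_n|\ge|\cos\theta_i|$ for all $i$ (so $\sec\theta_n$ is the smallest secant that can occur in $|U_{kk}|$) that bounds these by $|\eta-\vep||U_{kk}|$. With that orientation fixed, your treatment of the $k=N/2+j$ regime and of the bottom-right block via the scalar inequality goes through essentially as you sketch.
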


\red{We first establish a useful technical lemma.}
\begin{lemma}
\label{lemma: eta eps ineq}
\red{Let $\alpha,\beta,\t,\eta,\varepsilon \in \mathbb R$ such that $|\varepsilon| \le |\eta - \varepsilon|$. Then
\begin{equation*}
    |\eta\alpha \cos^2\t  - \varepsilon \beta| \le \cos^2\t|\eta \alpha - \varepsilon \beta| + \sin^2\t |\eta - \varepsilon||\beta|
\end{equation*}}
\end{lemma}
\begin{proof}
\red{Since $
\eta\alpha \cos^2\t  - \varepsilon  \beta = \cos^2\t(\eta \alpha - \varepsilon \beta) - \sin^2\t\varepsilon \beta,
$ this follows directly by the triangle inequality and $|\varepsilon| \le |\eta - \varepsilon|$.}
\end{proof}
\red{We can now prove \Cref{prop: gecp}.}
\begin{proof}[Proof of \Cref{prop: gecp}]
First, note the last implication \red{that $B$ is completely pivoted} follows since then 
$$
|B^{(k)}_{kk}| \le \left\|\begin{bmatrix}
    \V 0 & \V I_{N-k+1}
\end{bmatrix}B^{(k)} \begin{bmatrix}
    \V 0 \\ \V I_{N-k+1}
\end{bmatrix}\right\|_{\max} \le |U_{kk}| = |B^{(k)}_{kk}|
$$
for all $k$ using \eqref{eq: gecp max entry} with $\eta = 0$ and $\varepsilon = 1$, so that no row or column swaps would be needed at any intermediate GECP step. This determines the first part of \Cref{thm: new}, while the latter half follows then also from \Cref{thm: gf gepp} and \Cref{cor: gf inv sym}.

To prove \eqref{eq: gecp max entry}, we will use induction on $n = \log_2 N$. Note first the result is immediate for $k=1$ since then 
$$
\|\V I_N (\eta B - \varepsilon B^{(1)}) \V I_N\|_{\max} = |\eta - \varepsilon| \|B\|_{\max} = |\eta-\varepsilon||U_{11}|.
$$ So it suffices to assume $k \ge 2$. If $n = 1$, then using \eqref{eq: LU rotation} with  $B^{(2)} = U = U_\t$, we see for $k = 2$ then
$$
\left\| \begin{bmatrix}
0 & 1
\end{bmatrix} (\eta B - \varepsilon B^{(2)})\begin{bmatrix}
0 \\ 1
\end{bmatrix}\right\|_{\max} = |\eta \cos\t - \varepsilon \sec\t| = |\eta \cos^2\t - \varepsilon| |U_{22}| \le |\eta - \varepsilon| |U_{22}|
$$
using \Cref{lemma: eta eps ineq} with $\alpha = \beta = 1$.

Now assume the result holds for $\B_s(N/2)$ and $n \ge 2$, and let $B = B(\t,A) \in \B_s(N)$ for $A = B(\boldsymbol \t') \in \B_s(N/2)$ such that $\boldsymbol \t = (\boldsymbol \t',\t)$ with $\t=\t_n$, where we note also $\boldsymbol\t'$ still satisfies the condition $|\tan\t_{i+1}'| \le |\tan\t_i'| \le 1$ for all $i$. Since $B$ has an $LU$ factorization using GENP, then necessarily $A$ does also. Let $A = L'U'$ be this factorization, where we further note by \Cref{lemma:Bk} and \eqref{eq: LU rotation} then 
\begin{equation}
\label{eq: gecp uk form}
    U_{kk} = \left\{
    \begin{array}{ll}
    \cos\t U_{kk}' & \mbox{if $k \le N/2$}\\
    \sec\t U_{jj}' & \mbox{if $k = N/2 + j$ for $j\ge 1$.}
    \end{array}
    \right.
\end{equation}

For $1<k \le N/2$, then for $\V I = \V I_{N/2-k+1}$ when not indicated otherwise, we have
\begin{align*}
    &\begin{bmatrix}
    \V 0  &\V I_{N-k+1}
    \end{bmatrix}(\eta B - \varepsilon B^{(k)})\begin{bmatrix}
    \V 0 \\ \V I_{N-k+1}
    \end{bmatrix} \nonumber\\
    &\hspace{1pc}= \begin{bmatrix}
    \cos\t\begin{bmatrix}
    \V 0 & \V I
    \end{bmatrix}(\eta A - \varepsilon A^{(k)})\begin{bmatrix}
    \V 0 \\ \V I
    \end{bmatrix} & \sin\t\begin{bmatrix}
    \V 0 & \V I
    \end{bmatrix}(\eta A - \varepsilon A^{(k)})\\
    -\sin\t\left(\eta A - \varepsilon \begin{bmatrix}
    \V0\\&\V I
    \end{bmatrix}A^{(k)}\right)\begin{bmatrix}
    \V 0 \\ \V I
    \end{bmatrix}
    &- \sec\t\left((\varepsilon-\eta\cos^2\theta)A - \varepsilon\sin^2\t  \begin{bmatrix}
    \V0\\&\V I
    \end{bmatrix}A^{(k)}\right)
    \end{bmatrix} \label{eq: etaB-epBk 1}
\end{align*}
using \Cref{lemma:Bk}. Let $\eta'=\varepsilon - \eta \cos^2\t$ and $\varepsilon' =  \varepsilon \sin^2\t$ where we note %now $|\eta'| = |\eta \cos^2\t - \varepsilon| \le |\eta - \varepsilon| \le 1$ and
$$
|\varepsilon'| = \sin^2\t|\varepsilon| \le \cos^2\t|\eta - \varepsilon| = |\eta' - \varepsilon'|% \le 1.
$$
since $\sin^2\t\le \cos^2\t$ and $|\varepsilon|\le|\eta-\varepsilon|$. By the inductive hypothesis, we have
\begin{align*}
\left\|\begin{bmatrix}
    \V 0  &\V I
    \end{bmatrix}(\eta' A - \varepsilon' A^{(k)})\begin{bmatrix}
    \V 0 \\ \V I
    \end{bmatrix}\right\|_{\max} 
    %&\hspace{5pc}
    &\le |\eta'-\varepsilon'||U'_{kk}| %= \cos^2\t|\eta - \varepsilon||U'_{kk}| 
    = |\cos\t||\eta - \varepsilon||U_{kk}| \quad \mbox{and}\\
\left\|\begin{bmatrix} \V0 &\V I
\end{bmatrix}(\eta A - \varepsilon A^{(k)}) \begin{bmatrix}
\V0 \\\V I
\end{bmatrix} \right\|_{\max} &\le |\eta - \varepsilon||U_{kk}'| = |\sec\t||\eta - \varepsilon||U_{kk}|
\end{align*}
using \eqref{eq: gecp uk form}. Also, since $|\cos\t_{i+1}|\ge |\cos\t_i|$ for all $i$, then
$$
|\sec\t|\|A\|_{\max} = |\sec\t_n||U_{11}'| = \frac{\prod_{j=1}^{n-1}|\cos\t_j|}{|\cos\t_n|} \le \frac{\prod_{j \in \mathcal J_k}|\cos\t_j|}{\prod_{j \in [n] \backslash \mathcal J_k} |\cos\t_j|} =  |U_{kk}|
$$
for some $\varnothing \ne \mathcal J_k \subset [n]$ when $k>1$ by \Cref{lemma:Bk}. Next, we see
$$
|\eta \sin\t|\|A\|_{\max} \le |\eta - \varepsilon| |2\sin\t| \|A\|_{\max} \le |\eta - \varepsilon| |\sec\t|\|A\|_{\max} \le |\eta - \varepsilon||U_{kk}|
$$
using $|\eta|\le |\eta - \varepsilon|+|\varepsilon| \le 2|\eta - \varepsilon|$ and $|2\sin\t| \le |\sec\t|$ (since $|2\sin\t\cos\t| = |\sin(2\t)| \le 1)$. It follows 
\begin{align*}
&\left\|\begin{bmatrix}
    \V 0  &\V I_{N-k+1}
    \end{bmatrix}(\eta B - \varepsilon B^{(k)})\begin{bmatrix}
    \V 0 \\ \V I_{N-k+1}
    \end{bmatrix}\right\|_{\max}\\
    &\hspace{1pc} =\max\left(
    \begin{array}{c}
    |\eta \sin\theta|\|A_{N/2-k+1:,:k-1}\|_{\max},\\
    |\eta \sin\theta|\|A_{:k-1,N/2-k+1:}\|_{\max},\\
    |\cos\t|\left\|\begin{bmatrix} \V0 &\V I
\end{bmatrix}(\eta A - \varepsilon A^{(k)}) \begin{bmatrix}
\V0 \\\V I
\end{bmatrix} \right\|_{\max},\\
|\sec\t||\varepsilon - \eta \cos^2\theta|\| A\|_{\max},\\
|\sec\t|\left\|\begin{bmatrix}
    \V0&\V I
    \end{bmatrix}\left((\varepsilon-\eta \cos^2\theta )A - \varepsilon\sin^2\t  A^{(k)}\right)\begin{bmatrix}
    \V0\\\V I
    \end{bmatrix} \right\|_{\max}
\end{array}\right)\\
&\hspace{1pc}\le |\eta - \varepsilon||U_{kk}|
\end{align*}
using again \Cref{lemma: eta eps ineq} so that $|\varepsilon - \eta \cos^2\t| \le |\eta - \varepsilon|$.

For $k = N/2+j$ and $j \ge 1$ so that $N-k+1=N/2-j+1$, writing now $\V I = \V I_{N-k+1} = \V I_{N/2-j+1}$, we have
\begin{align*}
&\left\|\begin{bmatrix}
    \V 0  &\V I
    \end{bmatrix}(\eta B - \varepsilon B^{(k)})\begin{bmatrix}
    \V 0 \\ \V I
    \end{bmatrix}\right\|_{\max}\\
    &\hspace{1pc}= |\sec\t|\left\|\begin{bmatrix}
    \V 0  &\V I
    \end{bmatrix}(\eta \cos^2\t A - \varepsilon  A^{(j)})\begin{bmatrix}
    \V 0 \\ \V I
    \end{bmatrix} \right\|_{\max}\\
    &\hspace{1pc}= |\sec\t|\left\|\begin{bmatrix}
    \V 0  &\V I
    \end{bmatrix}\left|\eta \cos^2\t A - \varepsilon  A^{(j)}\right|\begin{bmatrix}
    \V 0 \\ \V I
    \end{bmatrix} \right\|_{\max}\\
    &\hspace{1pc}\le |\sec\t|\left(\cos^2\t\left\|\begin{bmatrix}
    \V 0  &\V I
    \end{bmatrix}\left|\eta  A - \varepsilon  A^{(j)}\right|\begin{bmatrix}
    \V 0 \\ \V I
    \end{bmatrix} \right\|_{\max}+\sin^2\t|\eta - \varepsilon| \left\|\begin{bmatrix}
        \V 0 & \V I
    \end{bmatrix} |A^{(j)}|\begin{bmatrix}
        \V 0 \\ \V I
    \end{bmatrix}\right\|_{\max}\right)\\
    &\hspace{1pc}= |\sec\t|\left(\cos^2\t\left\|\begin{bmatrix}
    \V 0  &\V I
    \end{bmatrix}(\eta  A - \varepsilon  A^{(j)})\begin{bmatrix}
    \V 0 \\ \V I
    \end{bmatrix} \right\|_{\max}+\sin^2\t|\eta - \varepsilon| \left\|\begin{bmatrix}
        \V 0 & \V I
    \end{bmatrix} A^{(j)}\begin{bmatrix}
        \V 0 \\ \V I
    \end{bmatrix}\right\|_{\max}\right)\\
    &\hspace{1pc}\le |\sec\t||\eta - \varepsilon||U'_{jj}| = |\eta - \varepsilon||U_{kk}|
\end{align*}
using \Cref{lemma:Bk} for the first equality, \Cref{lemma: eta eps ineq} for the first inequality (applied componentwise with $\alpha = A_{i'j'}$ and $\beta = A^{(j)}_{i'j'}$), the inductive hypothesis for the last inequality (with $\eta = 0$ and $\varepsilon = 1$ for the second term), \eqref{eq: gecp uk form} for the last equality, and the fact $\|A\|_{\max} = \| |A| \|_{\max}$ for the remaining steps.
\end{proof}

\Cref{thm: new} extends a classic result from Tornheim from 1970 that produced particular Sylvester Hadamard matrices that were complete pivoted:
\begin{proposition}[\cite{T70}]\label{prop: torn}
    If $A$ is completely pivoted and $H = \begin{bmatrix}
        1 & 1 \\ -1 & 1
    \end{bmatrix}$, then $A \otimes H$ is completely pivoted. In particular, then $\bigotimes^n H$ is completely pivoted.
\end{proposition}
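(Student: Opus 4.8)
The plan is to prove the first assertion by induction on the order $m$ of $A$: I will show that the first two GECP steps applied to $M:=A\otimes H$ require no row or column swap, and that the resulting order-$2(m-1)$ untriangularized block equals $A''\otimes H$, where $A'':=A_{2:,2:}-A_{2:,1}A_{1,2:}/A_{11}$ is the Schur complement of $A$ with respect to its $(1,1)$ pivot. The base case $m=1$ is direct: for $a\ne 0$, the matrix $aH$ has GENP form $aH=\bigl[\begin{smallmatrix}1&0\\-1&1\end{smallmatrix}\bigr]\bigl[\begin{smallmatrix}a&a\\0&2a\end{smallmatrix}\bigr]$, and the pivot search at each of its two steps selects the $(1,1)$ entry of the untriangularized block (at step $1$ because all four entries have magnitude $|a|$ with $(1,1)$ lexicographically first, at step $2$ trivially), so $aH$ is completely pivoted. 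Granting the first assertion, the second follows by induction on $n$: $\bigotimes^{1}H=H$ is completely pivoted by the base case, and $\bigotimes^{n+1}H=(\bigotimes^{n}H)\otimes H$ is completely pivoted by the first assertion applied to the completely pivoted matrix $A=\bigotimes^{n}H$.

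For the inductive step ($m\ge 2$), index the rows and columns of $M$ by pairs $(i,k)$ with $i\in\{1,\dots,m\}$ and $k\in\{1,2\}$ in lexicographic (column-major) order, so the $(i,j)$ block of $M$ is $A_{ij}H$. I would first record two consequences of the hypothesis that $A$ is completely pivoted. Since $|H_{k\ell}|=1$, we have $\|M\|_{\max}=\|A\|_{\max}=|A_{11}|$, and $|A_{11}|$ is attained at the $(1,1)$ linear position of $M$, which is lexicographically first; hence GECP step $1$ on $M$ selects $(1,1)$ with no swap. Moreover, the triangle inequality together with $|A_{ij}|,|A_{i1}|,|A_{1j}|\le|A_{11}|$ gives the bound $\|A''\|_{\max}\le 2|A_{11}|$, and $A''$ is itself completely pivoted, since GECP on $A$ restricted to its steps $2,\dots,m$ is exactly GECP on $A''$ and uses no swaps.

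Next I would carry out one elimination step on $M$. A short computation shows the untriangularized block of $M^{(2)}$ contains only: the entry $2A_{11}$ at linear position $(2,2)$; the entries $2A_{1j}$ in its first row and $2A_{i1}$ in its first column; the Schur entries $\pm A''_{i-1,j-1}$; and the ``mixed'' entries $A_{ij}+A_{i1}A_{1j}/A_{11}$. By the two bounds above (and $|A_{ij}+A_{i1}A_{1j}/A_{11}|\le 2|A_{11}|$), every entry of this block has magnitude at most $2|A_{11}|$, and $2|A_{11}|$ is attained at linear position $(2,2)$, which is the first row and first column of the block; hence GECP step $2$ also selects its diagonal entry with no swap. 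Performing that second elimination --- subtracting $A_{i1}/A_{11}$ times row $(1,2)$ from each row $(i,2)$ --- collapses the mixed entries through $A_{ij}+A_{i1}A_{1j}/A_{11}-(A_{i1}/A_{11})(2A_{1j})=A''_{i-1,j-1}$, so the $(i,j)$ block of $M^{(3)}$ for $i,j\ge 2$ is exactly $A''_{i-1,j-1}H$; that is, the order-$2(m-1)$ untriangularized block is $A''\otimes H$. Since $A''$ is completely pivoted of order $m-1$, the inductive hypothesis gives that $A''\otimes H$ is completely pivoted; as the remaining GECP steps on $M$ are precisely the GECP steps on that block, $M=A\otimes H$ is completely pivoted.

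I expect the main obstacle to be bookkeeping rather than conceptual depth: one must keep the lexicographic index ordering straight through two elimination steps, and --- the one genuinely delicate point --- verify that the step-$2$ pivot search is not derailed by $\|A''\|_{\max}$, which can strictly exceed $\|A\|_{\max}$ (it doubles already for $A=H$). The bound $\|A''\|_{\max}\le 2\|A\|_{\max}$ is exactly what prevents the Schur entries of $M^{(2)}$ from overtaking the $2A_{11}$ appearing in its second column, keeping step $2$ swap-free; the column-major tie-break then causes no difficulty since at each of the two steps the intended pivot sits in the first available row and column of the untriangularized block. (Reducing $A\otimes H$ to $H\otimes A$ by a perfect shuffle does not help directly, as complete pivotedness is not invariant under permutation similarity.)
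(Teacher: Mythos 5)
Your proof is correct. There is nothing in the paper to compare it against: the proposition is quoted from Tornheim \cite{T70} and the paper gives no proof of it. Your argument is the natural (and, as far as the classical literature goes, essentially the standard) one: the key identity that two swap-free GECP steps on $A\otimes H$ leave exactly $A''\otimes H$ as the untriangularized block, with $A''$ the Schur complement of $A_{11}$, is verified correctly, and the bounds $|2A_{1j}|,|2A_{i1}|,|A''_{i-1,j-1}|,|A_{ij}+A_{i1}A_{1j}/A_{11}|\le 2|A_{11}|$ (all consequences of $\|A\|_{\max}=|A_{11}|$) are precisely what is needed to keep the second pivot at the leading position of the block; your observation that $\|A''\|_{\max}$ may exceed $\|A\|_{\max}$ is indeed the one delicate point, and it is handled correctly. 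The treatment of ties is also consistent with this paper's column-major lexicographic GECP convention, since at both steps the intended pivot attains the block maximum and sits in the first scanned position (the block's leading row and column also contain some zero entries you did not list, but these are harmless). As a byproduct your induction identifies the GECP pivots of $A\otimes H$ as $p_1,2p_1,p_2,2p_2,\dots,p_m,2p_m$, where $p_k$ are the pivots of $A$, which matches the known pivot pattern for completely pivoted Sylvester Hadamard matrices. The only implicit hypothesis is that $A$ is nonsingular (all pivots nonzero), so that the divisions by $A_{11}$ and $2A_{11}$ are legitimate; this is standard in the definition of completely pivoted and not a gap.
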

There is a natural relationship between butterfly matrices and particular Sylvester Hadamard matrices, which will be further outlined in \Cref{sec: had}. Of note, butterfly matrices can be interpreted as continuous generalizations of these Sylvester Hadamard matrices, on which these Hadamard matrices then achieve certain maximizing properties for particular statistics, including the growth factors. \Cref{prop: torn} only guarantees $B({\boldsymbol{\t}}) \otimes H$ remains completely pivoted if $B(\boldsymbol{\t}) \in \B_s(N)$ is completely pivoted. Since $H = \sqrt 2 \cdot B(\frac\pi4,\frac\pi4)$, and matrices remain completely pivoted under multiplication by scalar matrices and diagonal sign matrices (e.g., if $A \in \mathbb R^{n \times n}$ is completely pivoted, then $A\cdot (\bigoplus^n \pm 1)$ is also completely pivoted), then \Cref{thm: new} further guarantees $B(\boldsymbol{\t}) \otimes R_\varphi$ is completely pivoted if $|\tan \varphi| \ge \max_j |\tan \t_j|$ rather than requiring only $|\tan \varphi| = 1$. Moreover, as noted in \cite{DaPe88}, \Cref{prop: torn} is not symmetric: if $A$ is completely pivoted, then $H \otimes A$ is not necessarily completely pivoted; this can be seen by considering $A = \begin{bmatrix}
    1 & 1 \\ 1 & 1
\end{bmatrix}$. \Cref{thm: new} has the additional flexibility that $R_\varphi \otimes B(\boldsymbol{\t})$ is completely pivoted if $|\tan \varphi| \le \min_j |\tan \t_j|$, as well as forming additionally completely pivoted butterfly matrices by embedding Kronecker rotation matrix factors that preserve the ordering from \Cref{thm: new}.

\subsection{Experiments for computed permutation factors}
\label{sec: experiments}

A natural question from \Cref{thm: new} is whether the additional structure of $\B_s(N)$ lends itself so that GECP factorizations are attainable using \textit{any} ordering of input angles. Unfortunately, this happens to only be the case  for small $n$.

\red{Let $Q_n$ denote} the perfect shuffle such that $Q_n(A \otimes B)Q_n^\top = B \otimes A$ for $A \in \mathbb R^{N/2 \times N/2}$ and $B \in \mathbb R^{2 \times 2}$. For example, $Q_1 = \V I_2$ while $Q_2 = P_{(2 \ 3)} \in \mathbb R^{4 \times 4}$. Moreover, all possible such perfect shuffles $\tilde Q_n$ are multiplicatively generated by $\V I_{2^i} \otimes Q_2 \otimes \V I_{2^j}$ where $i + j = n - 2$, as these perfect shuffles themselves are isomorphic to $S_n$, which is similarly generated by the transpositions $(i \ i+1)$. So the question then for $B \in \B_s(N)$, is when is $PBQ = LU$ the GECP factorization with $P = P_{\tilde B}\tilde Q_n$ and $Q = \tilde Q_n^\top$, where $P_{\tilde B} \tilde B = L_1U_1$ is the GEPP factorization of $B$ while $\tilde Q_n B \tilde Q_n^\top$ satisfies \eqref{eq: thm 1st cond}?

\Cref{t: perfect align} shows the output using computed GECP factorizations (using the added tolerance parameter for an expanded pivot candidate search) for $B \sim \B_s(N,\Sigma_S)$, counting only those where the reordered input angles produce a permutation factor other than those formed using the perfect shuffles $\tilde Q_n$ and the GEPP butterfly permutation. (See \cite{DAngeli_Donno_2017,P24} for the particular perfect shuffle and butterfly permutation forms then possible.) The set of experiments used multiple samples first of $\boldsymbol{\t} \sim \Uniform([0,2\pi)^n)$, transforming each angle with $\hat \t = \t$ if $|\tan \t| \le 1$ (i.e., no GEPP pivot movement is needed on $R_\t$) or $\hat \t = \frac\pi2 - \t$ if $|\tan \t| > 1$ (i.e., a GEPP pivot movement is needed on $R_\t$), then reordering the transformed angles $\hat{\boldsymbol{\t}}$ into $\tilde{\boldsymbol{\t}}$ so that $B(\tilde{\boldsymbol{\t}})$ is completely pivoted (i.e., satisfies the hypothesis of \Cref{thm: new}). Then the GECP factorization (using the added tolerance of $10^3 \cdot \epsilon_{\operatorname{machine}}$ to determine an initial list of potential pivot candidates) is computed for each of the $|S_n| = n!$ rearrangements of the angles of $\tilde{\boldsymbol{\t}}$. \Cref{t: perfect align} then records the count per $\sigma \in S_n$ rearrangement $\sigma(\boldsymbol{\t})$ such that $\sigma(\boldsymbol{\t})_j = \tilde{\boldsymbol{\t}}_{\sigma(j)}$ of $\tilde{\boldsymbol{\t}}$ only if either the computed $P,Q$ factors differ from the corresponding perfect shuffle $\tilde Q_n$ to return $PB(\sigma(\tilde{\boldsymbol{\t}}))Q = B(\tilde{\boldsymbol{\t}})$. The experiment is then repeated (between 2 to 5 times) for differing initial $\boldsymbol{\t} \sim \Uniform([0,2\pi)^n)$, recording any instance where a computed GECP factorization does not align with the corresponding perfect shuffle rearrangement.

\begin{table}[ht!]
\centering
{%\tiny
\begin{tabular}{r|ccc}
% &\multicolumn{6}{c}{$k$}\\
$n$ & $|S_n|$ & $\#\{PBQ \ne (\tilde Q_n)B(\tilde Q_n^\top)\}$ & \%\\ \hline 
1 & 1& - & -\\
2 & 2 & - & -\\
3 & 6 & - & -\\
4 & 24 & 2 & 8.3\%\\
5 & 120 & 36 & 30.0\%\\
6 & 720 & 381 & 52.9\%
% 2 & 36& 26& 12& 6 & 1\\
% 3 & 472392& 387828& 258552& 198396& 121418& 77472& \ldots\\
% & 2230& 
%1470&  776&  358&  132&  40&  8& 1
% \\ 5 & 1 & 458329 & 11205248 & 76953548 &  132562764 & 212663616 &  226752612 & 241887172 & \ldots\\
% 5 1, 458329, 11205248, 76953548, 132562764, 212663616, 226752612, \
% 241887172
\end{tabular}
}
\caption{Computed number of GECP factorizations $PBQ = LU$ for $B \in \B_s(N)$ where $P,Q$ take a form other than that expected from the corresponding perfect shuffle and GEPP factorization.}
\label{t: perfect align}
\end{table}

It happens that for $n = 1,2,3$, then the only permutation factors encountered are precisely those of the form $P = \red{P_{\tilde B}}\tilde Q_n$ and $Q = \tilde Q_n^\top$. \red{This is immediate for $n = 1$, since $Q_1 = \mathbf I_2$, so no column permutations occur. In this case, $P = \mathbf I_2$ if $|\tan \theta| \le 1$ and $P = P_{(1\ 2)}$ otherwise, 
which exactly matches the GEPP permutation $P_{\tilde B}$, and hence is consistent with 
the form $P = P_{\tilde B} \tilde Q_1$, $Q = \tilde Q_1^\top$ with $\tilde Q_1 = \mathbf I_2$. For $n = 2$, we illustrate the alignment in the simplest nontrivial case where $|\tan \t_j| \le 1$ for each $j$ (so no GEPP pivot movements are needed), noting that more detailed computations quickly become unwieldy, although they remain consistent with the observed behavior. Here $Q_2 = P_{(2\ 3)}$ and $Q_2 B(\theta_1,\theta_2) Q_2 = B(\theta_2,\theta_1)$, while $S_2 = \{1,(1\ 2)\}$.} 

\red{If $\sigma = 1$, then $|\tan \t_2| \le |\tan \t_1|$, so $B = B(\t_1,\t_2)$ is completely pivoted by \Cref{thm: new}. In this case, the GEPP, GECP, and GENP factorizations coincide, and hence $P = Q^\top = \tilde Q_2 = \V I_4$, yielding $PBQ = B = \tilde Q_2 B \tilde Q_2^\top$.} 

\red{If $\sigma = (1\ 2)$, then $|\tan \t_2| > |\tan \t_1|$, so that $B(\t_2,\t_1) = Q_2 B(\t_1,\t_2) Q_2$ is completely pivoted, and hence $\tilde Q_2 = Q_2$. By \Cref{thm: gf gepp}, $B(\t_1,\t_2)$ has aligning GEPP and GENP factorizations since $|\tan \t_j| \le 1$ for each $j$.} 

\red{In this setting, applying GECP to $B = B(\t_1,\t_2)$ has $\|B\|_{\max} = |B_{11}| = |\cos\t_1 \cos\t_2|$, so no GECP pivot movements are needed on the first step. After one elimination step, the remaining untriangularized block is
$$
B^{(2)}_{2:4,2:4} =
\begin{bmatrix}
\cos\t_2\sec\t_1 & 0 & \sin\t_2\sec\t_1\\
0 & \sec\t_2\cos\t_1 & \sec\t_2\sin\t_1\\
-\sin\t_2\sec\t_1 & -\sec\t_2\sin\t_1 & \sec\t_2\sec\t_1(\cos^2\t_1-\sin^2\t_2)
\end{bmatrix}
$$
using \Cref{lemma:Bk}. A direct comparison shows $\|B^{(2)}_{2:4,2:4}\|_{\max} = |B^{(2)}_{3,3}| = |\sec\t_2 \cos\t_1|$. To establish this, under the ordering $1 \ge |\tan \t_2| > |\tan \t_1|$, we have $|\sin\t_2| \le |\cos\t_2|$ and $|\sin\t_1| \le |\cos\t_1|$, so all off-diagonal terms are bounded by the diagonal contribution. Moreover,
\[
|\cos^2\t_1 - \sin^2\t_2| \le \cos^2\t_1
\]
since $\sin^2 \t_2 \le \cos^2\t_2 \le \cos^2\t_1 \le 2\cos^2\t_1$, which yields that the mixed term in the $(4,4)$-entry is also dominated. Thus the pivot sequence agrees with that induced by the perfect shuffle, yielding $P^{(2)} = (Q^{(2)})^\top = P_{(2\ 3)} = Q_2$. Since $Q_2 B Q_2^\top$ is completely pivoted, it follows that $P = Q^\top = Q_2 = \tilde Q_2$, and hence $PBQ = \tilde Q_2 B \tilde Q_2^\top$.}

\red{The remaining cases for $n=2$ (e.g., $|\tan \t_2| \ge 1 \ge |\tan \t_1|$ and $|\tan \t_2| \ge |\tan \t_1| \ge 1$) can be analyzed similarly (where now $P_{\tilde B}$ is a Kronecker product of powers of $P_{(1 \ 2)}$ by \Cref{thm: gf gepp}). The case $n=3$ follows from analogous but much more involved computations, which we omit for brevity.} 

However, \red{this structural alignment no longer persists} starting at $n = 4$. As $n$ increases, the proportion of \red{angle configurations that align with the perfect shuffle permutation decreases, and by $n = 6$ at least half of all rearrangements differ from the perfect shuffle.}

\begin{figure}[t] 
\centering
    \subfloat[$P$]{%
        \includegraphics[width=0.3\textwidth]{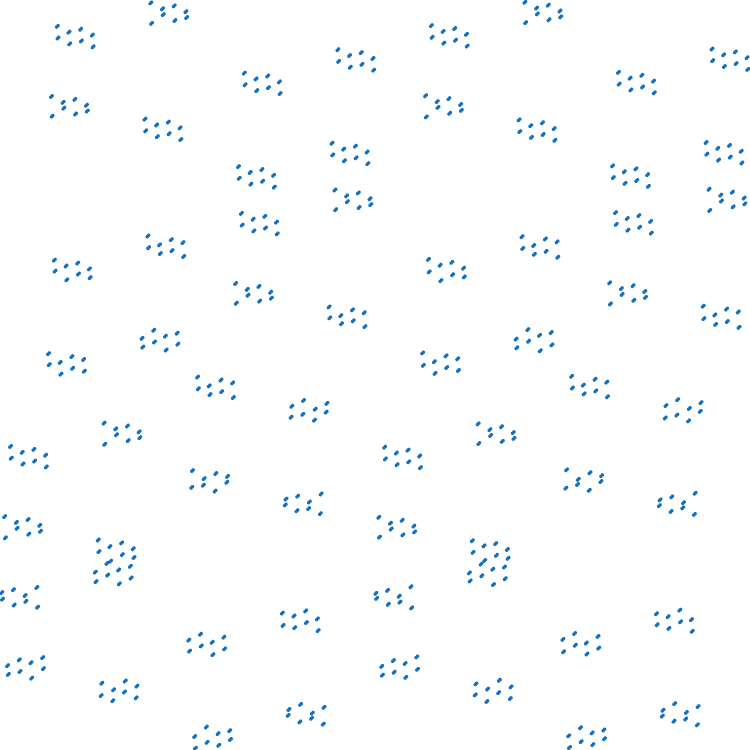}%
        }%
    \ % \hspace{.5pc}%
    \subfloat[$L+U$]{%
        \includegraphics[width=0.3\textwidth]{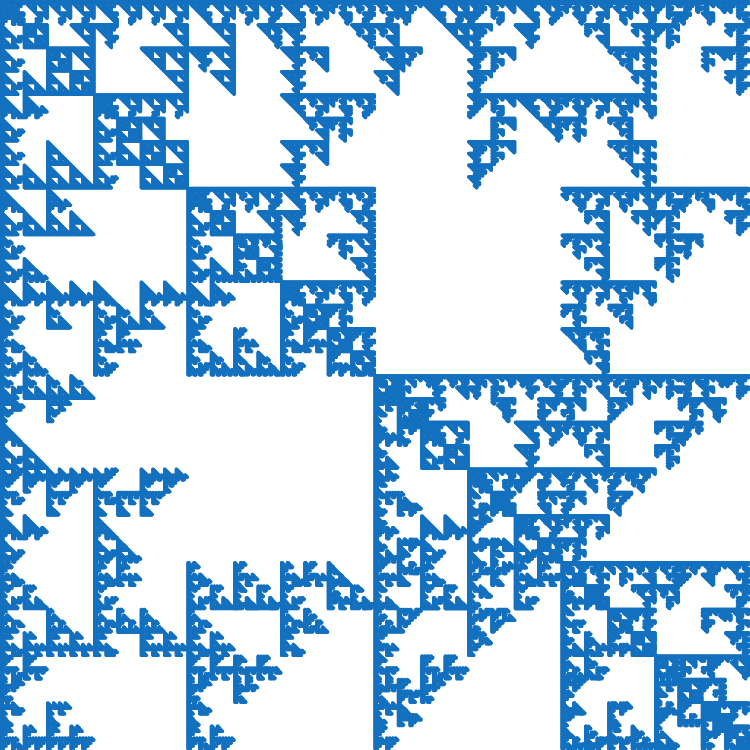}%
        }%
    % \hspace{.5pc}%
    \ \subfloat[$Q$]{%
        \includegraphics[width=0.3\textwidth]{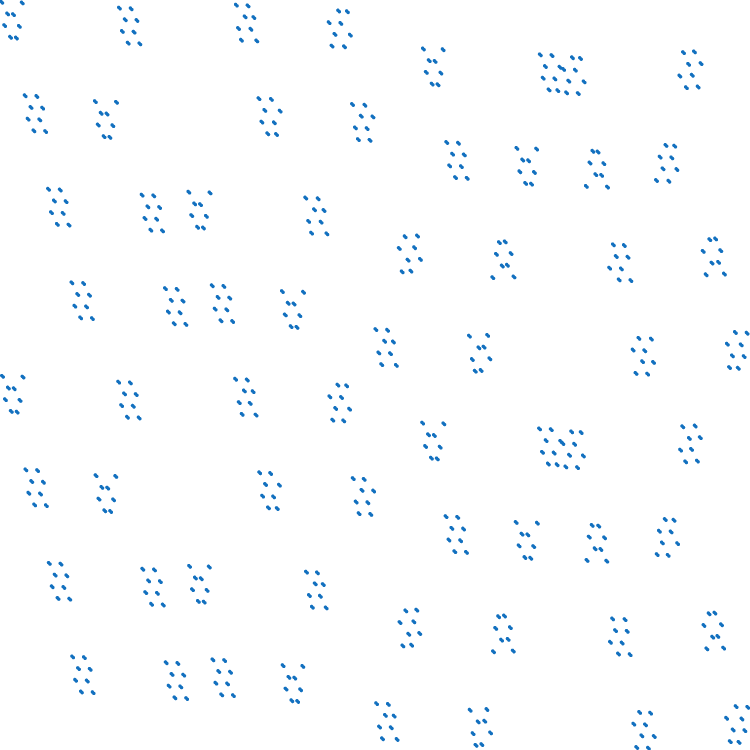}%
        }%
        \\
    \subfloat[$P$]{%
        \includegraphics[width=0.3\textwidth]{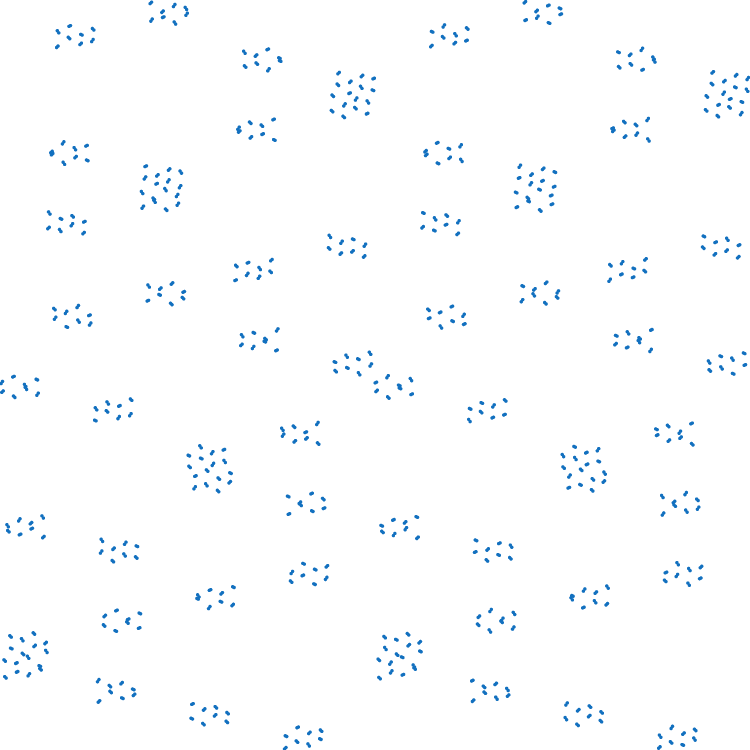}%
        }%
    \ % \hspace{.5pc}%
    \subfloat[$L+U$]{%
        \includegraphics[width=0.3\textwidth]{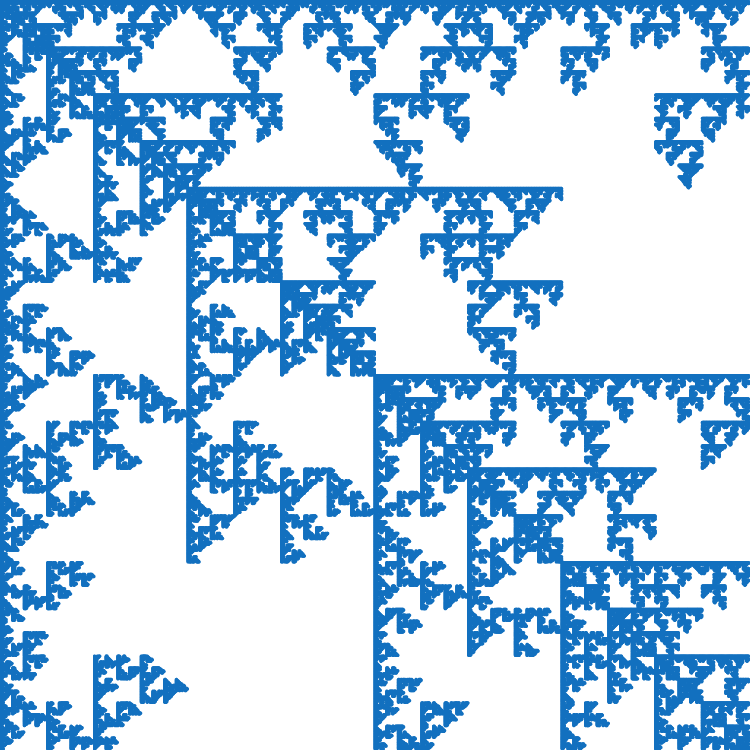}%
        }%
    % \hspace{.5pc}%
    \ \subfloat[$Q$]{%
        \includegraphics[width=0.3\textwidth]{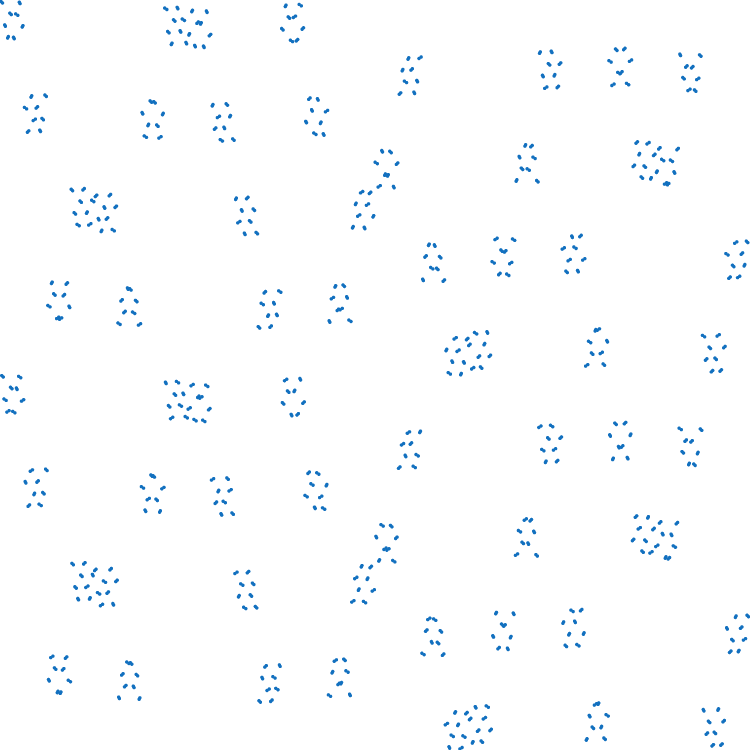}%
        }%
    % \hspace{.5pc}%
\caption{Sparsity patterns for the computed GECP factors of $B \in \B_s(2^{10})$ not satisfying \Cref{thm: new}, with (a)-(c) corresponding to GECP with an added tolerance parameter during the pivot candidate compilation, and (d)-(f) without the added tolerance setting.}
\label{fig: not CP}%
\end{figure}

For example, if $\boldsymbol{\t} \in [0,2\pi)^4$ such that $|\tan \t_4| < |\tan \t_3| < |\tan \t_2| < |\tan \t_1| \le 1$, then the computed GECP factorization using the reordered input angles $\boldsymbol{\t}=[\t_4,\t_3,\t_1,\t_2]^\top$ takes the form $P_{\sigma_1}B(\boldsymbol{\t})P_{\sigma_2}^\top$, where 
\begin{equation*}
    \sigma_1 = (12 \ 14)(10 \ 15)(8 \ 10)(7 \ 9)(6 \ 14)(5 \ 6)(4 \ 13)(3 \ 5)(2 \ 9) = \sigma_2^{-1}
\end{equation*}
compared to the  perfect shuffle permutation $\rho$ with $\tilde Q_n = P_\rho$, where
\begin{equation*}
    \rho = (14 \ 15)(12 \ 15)(10 \ 11)(8 \ 14)(7 \ 10)(6 \ 10)(5 \ 9)(4 \ 13)(3 \ 5)(2 \ 9).
\end{equation*}
In particular, considering the given forms \eqref{eq: perm form} for each permutation, then the first GECP step where the computed permutation differed from the perfect shuffle was at step 5. (So the first 4 GECP steps align for both strategies.) Two pivot candidates at GECP step 5 include the entries at locations $(6,6)$ and $(9,9)$. Using the column-major lexicographic GECP tie-breaking strategy, then the $(6,6)$ is deemed the new pivot and so each computed permutation uses the row and column transpositions $(5\ 6)$. In order to have recovered the necessary perfect shuffle, the GECP tie-breaking strategy would have needed to choose instead the entry at $(9,9)$ and hence the transposition $(5 \ 9)$.

\Cref{fig: not CP} shows both the sparsity patterns for the GECP factors of $B$ not in the particular ordered form found in \Cref{thm: new}, first showing the GECP with the added tolerance parameter (to widen the potential pivot candidates before implementing the column-major lexicographic tie-breaking strategy), which shows the Kronecker product is broken for this particular input $B \in \B_s(2^{10})$. This also includes the sparsity patterns for the GECP factors without using the added tolerance parameter for the pivot search, that essentially returns a ``random'' pivot among the potential pivot candidates due to rounding. Again, an interesting observation is the sparsity and symmetry of the final $L+U$ terms is preserved still, both with and without the added tolerance.

\begin{figure}[t] 
\centering
    \subfloat[$\rho(B)$]{%
        \includegraphics[width=0.45\textwidth]{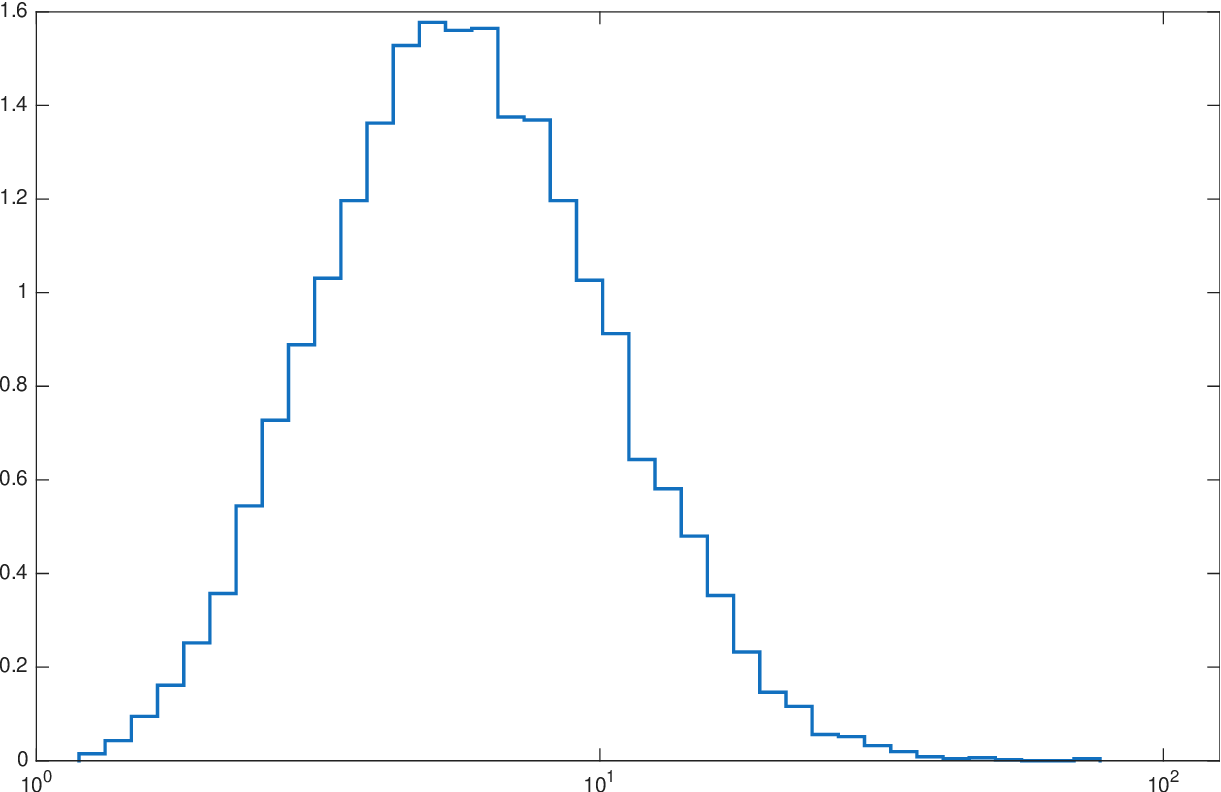}%
        }%
        % \label{fig: compare hist max}
    \ % \hspace{.5pc}%
    \subfloat[$\rho_\infty(B)$]{%
        \includegraphics[width=0.45\textwidth]{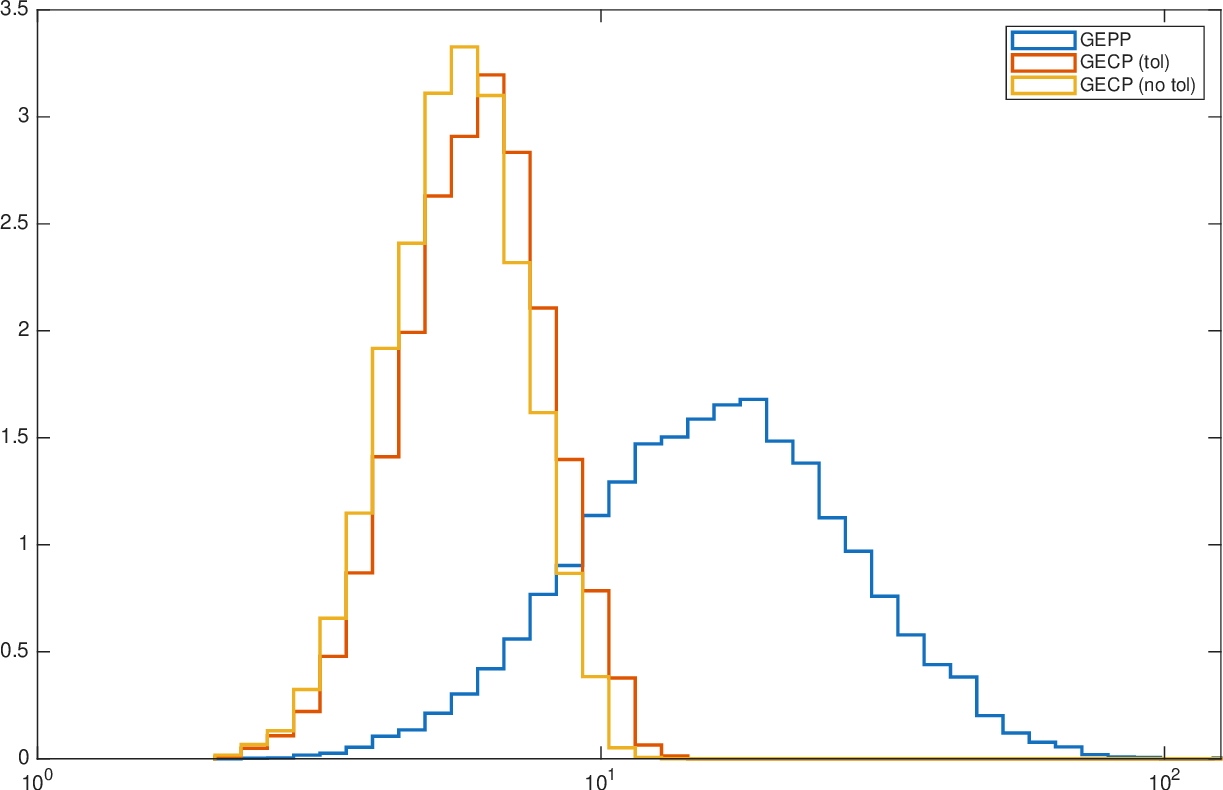}%
        }%
        % \label{fig: compare hist inf}
    % \hspace{.5pc}%
\caption{Histograms of computed growth factors (a) $\rho(B)$ and (b) $\rho_\infty(B)$ where $B \sim \B_s(N,\Sigma_S)$, using GEPP and both GECP with and without an added tolerance parameter to enforce the column-major lexicographic tie-breaking strategy, using $10^4$ trials with $N = 2^8$ and logarithmic scaling. Each setting yielded indistinguishable histograms for $\rho(B)$ in (a), so the legend is omitted. The $\rho_\infty$ growth factor in (b) shows sensitivity to the pivoting strategy and tolerance setting.}
\label{fig: compare hist}%
\end{figure}

Although \Cref{thm: new} yields the explicit random growth factors for butterfly matrices whose input angles preserve a particular monotonicity property, we expect the random growth factors using GECP to not actually need to rely on the input ordering. \Cref{thm: new} used this ordering to then determine every explicit intermediate GECP form of the butterfly linear system, including explicitly the final matrix factorization $PBQ = LU$, that could then determine how the local growth properties are presented at each step. \Cref{thm: gf gepp} guaranteed the growth factor using GE with \textit{any} pivoting scheme is minimized using GEPP. This further established that GERP also attained this minimal value using any ordering of the input angles, as the GERP and GEPP factorizations then aligned perfectly (still assuming a column-major GERP pivot search). 

Empirically, we expect this to also be the case with GECP. \Cref{fig: compare hist} shows the results of running $10^4$ trials, uniformly sampling $B \sim \B_s(N,\Sigma_S)$ with $N = 2^8$ (a sufficient size chosen for efficient sampling). For each sampled matrix, we computed the GEPP growth factors, along with each GECP growth factors using first the added tolerance parameter (with $\operatorname{tol} = 10^3 \cdot \epsilon_{\operatorname{machine}}$) to ensure the column-major lexicographic tie-breaking strategy was enforced, as well as the GECP growth factors without this added tolerance parameter. In \textit{all} $10^4$ trials, the computed growth factors matched exactly within $1.2790\cdot 10^{-13}$. \Cref{fig: compare hist}(a) shows the overlaid final histograms of the computed growth factors $\rho(B)$ on a logarithmic scaling. For comparison, an alternative growth factor
\begin{equation*}
    \rho_\infty(A) = \frac{\|L\|_\infty \|U\|_\infty}{\|A\|_\infty}
\end{equation*}
was additionally computed for each GE factorization. This was used in particular on the experiments in \cite{PT23}, which presented an analogous full distributional description for $\rho_\infty(B)$ for $B \sim \B_s(N,\Sigma_S)$ when using GEPP. Unlike using the max-norm $\|\cdot \|_{\max}$ (as in $\rho(B)$ and seen in \Cref{fig: compare hist}(a)), the induced $L^\infty$ matrix norm $\|\cdot \|_\infty$ was more sensitive to the corresponding row and column pivots encountered in GECP as seen in the non-overlaying histograms in \Cref{fig: compare hist}(b). Of note, the added randomness of the location of the pivot at each intermediate step in the GECP without tolerance led to a smaller overall computed $L^\infty$-growth factor.

\section{Butterfly Hadamard matrices} \label{sec: had}

% \subsection{Growth factors of Hadamard matrices}

As evidenced by \eqref{eq: gf angle vec}, the growth factors for butterfly matrices are maximized when $$\max(|\tan \t_j|,|\cot \t_j|) = |\tan \t_j| = 1$$ for each $j$, in which case $\rho(B) = 2^n = N$. Hence, $\rho(B(\boldsymbol{\t})) \in \B_s(N)$ is maximal whenever $$\boldsymbol{\t} \in \{ \pm \frac\pi2 \pm \frac\pi4\} = \{\frac\pi4, \frac{3\pi}4,\frac{5\pi}4,\frac{7\pi}4\}.$$ In this case, since then $\sqrt 2 \cdot R_{\t} = H \cdot (\pm 1 \oplus \pm 1)$; it follows $H(\boldsymbol{\t}) = \sqrt{N} \cdot B(\boldsymbol{\t})$ is a scaled Hadamard matrix. A similar construction works for any scalar or diagonal butterfly matrix formed only using input angles $\pm \frac\pi2 \pm \frac\pi4$, such that $H(\boldsymbol{\t}) = \sqrt N \cdot B(\boldsymbol{\t})$ is again a Hadamard matrix; this will be formally established in \Cref{prop: butterfly hadamard}. For convention, we will refer to such Hadamard matrices as \textit{butterfly Hadamard matrices}, and we will let $\H_s(N), \H(N),\H^{(D)}_s(N), \H^{(D)}(N)$ then correspond to the butterfly Hadamard matrices formed respectively by requiring $B(\boldsymbol{\t})$ belong to $\B_s(N), \B(N), \B_s^{(D)}(N), \B^{(D)}(N)$. 

The most famous ongoing conjecture with Hadamard matrices corresponds to the existence of such a matrix of all orders divisible by 4. The smallest such order for which no Hadamard matrix has been found (yet) is $768$ \cite{Dok08}. Hadamard matrices have their own rich history in terms of the growth problem. The $n^{O(\log n)}$ GECP growth bound Wilkinson provided was believed to be far from optimal. For nearly 30 years, a popular belief was that the growth was actually bounded by $n$, and that this could be achieved only by Hadamard matrices (see \cite{EdUr23} for further historical background). This was disproved by Edelman in \cite{EdelmanGECP} when he confirmed a $13\times 13$ matrix with growth of about 13.0205, upgrading a previous empirical counterexample using floating-point arithmetic one year earlier by Gould in \cite{Go91} to a true counterexample in exact arithmetic. However, the previous conjectured growth bound of $n$ still remains intact for Hadamard matrices (and orthogonal matrices; see also \cite{p24_orth}).

The Hadamard growth problem further subdivides the target to consider additionally the growth problem on different equivalence classes of Hadamard matrices: two Hadamard matrices $H_1,H_2$ are equivalent if there exists two signed permutation matrices $P_1, P_2$ such that $P_1 H_1 = H_2 P_2$. There is only one Hadamard equivalence class for orders up to 12, while there are 5 distinct classes of order 16, 2 for order 20, 60 for order 24, 487 for 28, and then millions for the next few orders (e.g., 13,710,027 for 32; see \cite{Kharaghani_Tayfeh-Rezaie_2013}).  By considering all Hadamard equivalence classes, the Hadamard growth problem has only been resolved up to $n = 16$ \cite{KrMi09}. The Hadamard growth problem is additionally resolved (using \Cref{prop: torn}) for Sylvester equivalent Hadamard matrices of all orders. A straightforward check shows that $H(\boldsymbol{\t}) \in \H_s(N)$ is equivalent to a Sylvester Hadamard matrix, so that then \Cref{thm: new} provides an additional resolution path for the Sylvester Hadamard growth problem. In this context, as well as with \Cref{prop: lipschitz}, then the simple scalar butterfly growth problem can be viewed as a continuous generalization of the Sylvester Hadamard growth problem.

Another open problem of interest with Hadamard matrices involves the number of such possible matrices of a given size $m$. By considering only the $m!$ permutations of the rows, then a lower bound on the count of Hadamard matrices is $2^{\Omega(m \log m)}$ while an upper bound of $2^{\binom{m+1}2} = 2^{O(m^2)}$ is also easy to attain (see \cite{Fe18} for an overview of this conjecture, along with the current improved upper bound of order $2^{(1-c)m^2/2}$ for a fixed constant $c$; it is further conjectured the lower bound is the correct asymptotic scaling). The butterfly Hadamard matrices produce a rich set of Hadamard matrices that can be used as a simple way to produce random Hadamard matrices when using random input angles from $\pm \frac\pi2 \pm \frac\pi4$ (e.g., consider a uniform angle from this set), as is seen here:
\begin{proposition}
    \label{prop: cnt butterfly had}
    For $N = 2^n$, then $|\H_s(N)| = 2^{n+1} = 2 N$, $|\H(N)| = 2^{3 \cdot 2^{n-1} - 1} = 2^{\frac32 N - 1}$, $|\H_s^{(D)}(N)| = 2^{2^n -n + 1} = 2^{N - n + 1}$, and $|\H^{(D)}(N)| = 2^{2^{n-1} n + 1} = 2^{N n/2 + 1}$. In particular, $|H^{(D)}(N)| = 2^{O(N \log N)}$ matches the conjectured asymptotic bound on the number of Hadamard matrices.
\end{proposition}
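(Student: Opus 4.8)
The plan is to prove all four identities by induction on $n$, exploiting the recursive block structure of butterfly matrices and carefully quantifying the redundancy in the angle parametrization. For each family, with $M$ its number of input angles ($M=n,\,N-1,\,N-1,\,nN/2$ respectively), every member is $\sqrt N\,B(\boldsymbol\t)$ for some $\boldsymbol\t\in\{\pm\tfrac\pi2\pm\tfrac\pi4\}^M$ (that such a matrix is genuinely Hadamard is \Cref{prop: butterfly hadamard}), so the $4^M$ tuples surject onto the family and its cardinality equals $4^M/\phi$, where $\phi$ is the number of tuples producing a given matrix. The whole task is to show $\phi$ is independent of the matrix and to compute it; once $\phi$ is in hand, $4^M/\phi$ collapses to the stated closed form by a routine calculation.

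For the scalar simple family the count is immediate without recursion. By the mixed-product property $\sqrt N\,B(\boldsymbol\t)=\bigotimes_{j=1}^n(\sqrt2\,R_{\t_j})$, and each factor is one of $\pm\sqrt2\,R_{\pi/4},\ \pm\sqrt2\,R_{3\pi/4}$; writing $\sqrt2\,R_{\t_j}=\epsilon_jM_j$ with $\epsilon_j\in\{\pm1\}$ and $M_j\in\{\sqrt2\,R_{\pi/4},\sqrt2\,R_{3\pi/4}\}$ gives $\sqrt N\,B(\boldsymbol\t)=\bigl(\prod_j\epsilon_j\bigr)\bigotimes_jM_j$. Since a Kronecker product of nonsingular $2\times2$ matrices determines its factors up to scalars, the matrix recovers $(M_1,\dots,M_n)$ together with the single bit $\prod_j\epsilon_j$, and conversely; hence $|\H_s(N)|=2^n\cdot 2=2^{n+1}$. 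For the scalar nonsimple family I peel off the outermost angle $\t$ in \eqref{eq:bm_def}: $\sqrt N\,B=\begin{bmatrix}a\hat A_1&b\hat A_2\\-b\hat A_1&a\hat A_2\end{bmatrix}$ with $(a,b)=(\sqrt2\cos\t,\sqrt2\sin\t)\in\{\pm1\}^2$ and $\hat A_i=\sqrt{N/2}\,A_i\in\H(N/2)$. Reading off the $(1,1)$ and $(2,1)$ blocks recovers $\hat A_1$ up to an overall sign, which forces $a$, $b$, and $\hat A_2$, so the preimage is exactly $\{((a,b),\hat A_1,\hat A_2),\,((-a,-b),-\hat A_1,-\hat A_2)\}$; both entries are admissible because $\H(N/2)$ is closed under negation for $N/2\ge2$ (negating a butterfly merely shifts its outermost angles by $\pi$). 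Thus $|\H(N)|=2\,|\H(N/2)|^2$ for $N\ge4$, and with $|\H(2)|=4$ this unwinds to $2^{3\cdot2^{n-1}-1}$.

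The two diagonal families follow the same template, but with the scalars $a,b$ replaced by a pair $D_C,D_S$ of diagonal sign matrices of order $N/2$ coming from the $N/2$ outermost angles, and with $\hat A_1=\hat A_2$ in the simple case: $\sqrt N\,B=\begin{bmatrix}D_C\hat A_1&D_S\hat A_2\\-D_S\hat A_1&D_C\hat A_2\end{bmatrix}$. Comparing two such representations of the same matrix forces $\hat A_i'=E\hat A_i$, $D_C'=ED_C$, $D_S'=ED_S$ for a single diagonal sign matrix $E$ of order $N/2$, subject to $E\hat A_i$ remaining in the smaller class; hence the relevant fiber is $\phi_n=\#\{E:\ E\hat A_i\ \text{lies in the order }N/2\text{ diagonal class for every }i\}$, and the recursions read $|\H_s^{(D)}(N)|=4^{N/2}\,|\H_s^{(D)}(N/2)|/\phi_n$ and $|\H^{(D)}(N)|=4^{N/2}\,|\H^{(D)}(N/2)|^2/\phi_n$. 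To evaluate $\phi_n$ I would substitute the recursive block form of $\hat A_i$ and push the condition ``$E\hat A_i$ stays in the class'' down the recursion, reducing it level by level to a constraint on the innermost $2\times2$ rotation factor, whose admissible sign matrices can be counted explicitly; feeding $\phi_n$ back into the two recursions with the base case $|\H_s^{(D)}(2)|=|\H^{(D)}(2)|=4$ produces the stated formulas. The closing sentence is then immediate, since $nN/2=\tfrac12 N\log_2 N$.

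The step I expect to be the real obstacle is precisely this evaluation of $\phi_n$ in the diagonal cases. Unlike the scalar case, the diagonal classes $\H_s^{(D)}(N/2)$ and $\H^{(D)}(N/2)$ are not invariant under arbitrary diagonal sign changes, so one cannot simply declare the fiber to be $2$; one must identify the exact sign-symmetry subgroup of each class, check it is the same for every member (so that $\phi_n$ is genuinely well defined and the fibers are uniform), and track how the constraint propagates through all of the lower recursion levels — with separate attention to the base level, where the relevant closure property fails. Everything else — solving the recursions, matching the initial conditions, and the final asymptotic remark — is routine bookkeeping.
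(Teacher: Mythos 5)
Your handling of the two scalar families is correct and is essentially the paper's own argument: the paper likewise counts outer-factor choices times inner-factor choices and divides by the global-sign redundancy, and your Kronecker-factor count for $\H_s(N)$ is the same computation as the remark following the proposition. The genuine gap sits exactly where you flagged it: you never evaluate the fiber $\phi_n$ for the diagonal families, and the assertion that feeding $\phi_n$ back into the recursions ``produces the stated formulas'' is unsupported. Your caution is in fact decisive, not a technicality. The diagonal classes \emph{are} closed under left multiplication by every sign matrix of the block form $E\oplus E$ with $E$ an arbitrary diagonal sign matrix of half the order, because $E\Gamma$ and $E\Sigma$ are again admissible outer diagonal factors (each diagonal position of the outer $(C,S)$ pair independently realizes all four sign patterns). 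Hence beyond the first recursion level the fiber is far larger than $2$ and grows with $N$, so the ``overcount by $2$'' device that settles the scalar cases — and which is all the paper's proof invokes for the diagonal cases as well — does not carry over.

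Moreover, carrying out your (correctly set up) recursion honestly does not return the stated closed forms, so the deferred step cannot be completed as you hope. Already at $N=4$ for the simple diagonal family there are $4^{N-1}=64$ admissible angle tuples, and two tuples yield the same matrix only via the global sign: equating the $(1,1)$ blocks $\Gamma'\hat a'=\Gamma\hat a$ with $\hat a,\hat a'\in\H(2)$ forces $\det(\Gamma'\Gamma)=1$, i.e.\ $\Gamma'=\pm\Gamma$, after which all remaining data are forced. Thus $|\H_s^{(D)}(4)|=32$ rather than $2^{N-n+1}=8$, and the analogous count gives $|\H^{(D)}(4)|=128$ rather than $32$. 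The paper's one-line recursion for the diagonal families tacitly assumes a uniform fiber of $2$ and, as printed, is also inconsistent about the number of independent outer angles (it uses $\alpha_1^{N/2}$ outer choices where the outer level of the order being built has twice that many angle slots), which is why a direct check at order $4$ already disagrees with the claimed counts. So your proposal, as written, proves only $|\H_s(N)|$ and $|\H(N)|$; completing the diagonal cases requires identifying the exact sign-symmetry group $\{E:\ E\hat A\in\H^{(D)}(N/2)\}$ level by level, checking it is uniform over the class, and then re-deriving (and in all likelihood correcting) the stated formulas — it is not routine bookkeeping, and you should not expect to recover the missing step from the paper's own argument.
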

\begin{proof}
    If $\alpha_n$ is the respective size for each set of butterfly Hadamard matrices of order $2^n$, then each satisfy $\alpha_1 = 4$ along with the respective recurrence relations \begin{equation*}
    \alpha_{n+1} = \left\{
    \begin{array}{ll} 
    \alpha_1 \cdot \alpha_n/2 = 2 \alpha_n & \mbox{for $\H_s(N)$,}\\ 
    \alpha_1 \cdot \alpha_n^2/2 = 2\alpha_n^2 & \mbox{for $\H(N)$,}\\ 
    \alpha_1^{N/2} \cdot \alpha_n/2 = 2^{2^n-1} \alpha_n & \mbox{for $\H_s^{(D)}(N)$, and}
    \\ 
    \alpha_1^{N/2} \cdot \alpha_n^2/2 = 2^{2^n - 1} \alpha_n^2 & \mbox{for $\H^{(D)}(N)$,}
    \end{array}
    \right.
\end{equation*}
where \eqref{eq:bm_def} can be used to first determine the number of different inputs for each of the left and right factors, which then overcount by 2 as the number of ways that the identity matrix can be factored out (with $-\V I$ terms from both the left and right). A direct check then verifies each of the above counts satisfy each respective recurrence. (Equivalently, one can solve the recurrences using instead $\beta_n = \log_2 \alpha_n$.)
\end{proof}

\begin{remark}
    A different way to equivalently count $|\H_s(N)|$ is to first consider the $4^n$ possible input vectors with $\t_j = \pm \frac\pi2\pm \frac\pi4$. This overcounts by all inputs that have scalar factors that combine to produce the identity matrix. This occurs with an even number of the input Kronecker factors can have a $-\V I_2$ that factors out, which occurs $\sum_{k \operatorname{ even}} \binom{n}k = 2^{n-1}$ times. Hence, the total count is then $4^n/2^{n-1} = 2^{n+1} = 2 N$.
\end{remark}

Popular constructions of Hadamard matrices include the Sylvester and Walsh-Hadamard constructions that work for for $N = 2^n$ or $N = mk$ where a Hadamard matrix is known to exist of order $m$ and $k$. Alternative constructions of Hadamard matrices include the Paley construction, which uses tools from finite fields, and the Williamson construction, which uses the sum of four squares \cite{Pa33,Wi44}. Applications of these only work for very particular orders and do not cover all potential multiple of 4 orders.

The butterfly Hadamard matrices provide an additional method of constructing certain Walsh-Hadamard matrices. As previously mentioned, $\H_s(N)$ yield particular Sylvester equivalent Hadamard matrices. This gives one way to generate a Sylvester Hadamard matrix using the butterfly matrices, with $2N$ possible butterfly Hadamard matrices of order $N$. In light of \Cref{prop: lipschitz}, the particular butterfly Hadamard matrices can also be interpreted as being Hadamard representations for butterfly matrices whose input vectors have $(\cos \t_j,\sin\t_j)$ with particular sign combinations in $\{(\pm 1,\pm 1)\}^M$, with $M = n,N-1,N-1,Nn/2$ for respectively scalar simple, scalar nonsimple, diagonal simple, and diagonal nonsimple butterfly matrices (assuming $|\cos\t_j \sin \t_j| \ne 0$, so that $\t_j$ is not an integer multiple of $\frac\pi2$). Hence, the butterfly Hadamard matrices can be viewed as representing the fixed sector $(\pm 1,\pm 1)^M$ on which the input angle vector maps to.

Rather than fixing given angles, one can generate a Hadamard matrix using (almost) any butterfly matrix combined with the $\sgn$ function, which when applied to a matrix acts componentwise so that $(\sgn(A))_{ij} = \sgn(A_{ij}) = \frac{A_{ij}}{|A_{ij}|}$, where we define $\sgn(0) := 0$. If $A \in (\mathbb R\backslash \{0\})^{n\times m}$ then $\sgn(A) \in \{\pm 1\}^{n\times m}$. In particular, note if $\t_j \not\in\frac\pi2\mathbb Z$ for all $j$, then $\sgn(B(\boldsymbol\t)) \in \{\pm1\}^{N\times N}$ for any scalar or diagonal butterfly matrix.

Note $\sgn(DA) = \sgn(D)\sgn(A)$ if $D$ is a diagonal matrix. It follows then
\begin{equation}
\label{eq: signed simple hadamard}
    \sgn(A\otimes B) = \sgn(A) \otimes \sgn(B), \quad \sgn\left((A \otimes \V I_{N/2})(B \oplus C) \right) = (\sgn(A) \otimes \V I_{N/2})(\sgn(B) \oplus \sgn(C))
\end{equation}
for $A \in \mathbb R^{2\times 2}$ and $B,C \in \mathbb R^{N/2}$. In particular, for $C,S$ diagonal, then
\begin{equation}
\label{eq: signed hadamard}
    \sgn\left(\begin{bmatrix}
    C A_1 & S A_2\\-SA_1 & C A_2
    \end{bmatrix} \right) = \begin{bmatrix}
    \sgn(C) \sgn(A_1) & \sgn(S) \sgn(A_2)\\-\sgn(S)\sgn(A_1) & \sgn(C)\sgn(A_2)
    \end{bmatrix}.
\end{equation}

For $B(\boldsymbol \t) \in \B_s(N)$, from \eqref{eq: signed simple hadamard} we have
\begin{equation}
\sgn(B(\boldsymbol\t)) = \sqrt{N}\cdot B(\hat{\boldsymbol\t}) \quad \mbox{where} \quad \hat \t_j = \frac\pi4\left(2\left\lfloor \frac{2\t_j}{\pi} \right\rfloor+1\right),
\end{equation}
%This gives an alternative proof of \Cref{prop: butterfly hadamard 0}. 
where then $\hat \t$ sends $\t$ to the representative of $\pm \frac\pi2 \pm \frac\pi4$ based on which quadrant $(\cos\t,\sin \t)$ is located. For example, $\hat 1 = \frac\pi4$ and $\hat{4} = \frac{5\pi}4$. Additionally, from \eqref{eq: signed hadamard} we have $\sgn(B)$ is a Walsh-Hadamard matrix for also $B$ a nonsimple scalar butterfly matrix, simple diagonal butterfly matrix and nonsimple diagonal butterfly matrix assuming $\t_j \not\in \frac\pi2\mathbb Z$ for any $j$. The nonsimple scalar butterfly matrix case follows immediately from \eqref{eq: signed hadamard}. The diagonal case is less direct:

\begin{proposition}
\label{prop: butterfly hadamard}
If $B(\boldsymbol \t)$ is a scalar or diagonal order $N$ butterfly matrix with $\t_j \not\in \frac\pi2\mathbb Z$ for all $j$, then $\sgn(B(\boldsymbol\t)) = \sqrt{N} \cdot B(\hat{\boldsymbol\t})$ is a Hadamard matrix.
\end{proposition}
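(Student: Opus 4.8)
The plan is to prove the statement by induction on $n=\log_2 N$, working throughout with the recursive form \eqref{eq:bm_def}; this covers both cases at once, since a scalar butterfly matrix is the special case where $C,S$ are scalar multiples of $\V I_{N/2}$, which are in particular diagonal. I would split the goal into two pieces: (i) the identity $\sgn(B(\boldsymbol\t))=\sqrt N\,B(\hat{\boldsymbol\t})$, and (ii) the claim that $B(\boldsymbol\t)$ has no zero entries under the hypothesis $\t_j\notin\frac\pi2\mathbb Z$ for all $j$. Granting these, $\sgn(B(\boldsymbol\t))\in\{\pm1\}^{N\times N}$ by (ii), while $B(\hat{\boldsymbol\t})\in\SO(N)$ by construction (it is itself a butterfly matrix, with angles among $\pm\frac\pi2\pm\frac\pi4$), so $\sgn(B(\boldsymbol\t))\sgn(B(\boldsymbol\t))^T=N\,B(\hat{\boldsymbol\t})B(\hat{\boldsymbol\t})^T=N\V I_N$, which is exactly the Hadamard condition.

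For (i) the base case $N=2$ is a direct check: $B(\t)=R_\t$, and since $\t\notin\frac\pi2\mathbb Z$ every entry of $R_\t$ is nonzero, so $\sgn(R_\t)$ has entries $\pm1$ and equals $\sqrt2\,R_{\hat\t}$ with $\hat\t=\frac\pi4(2\lfloor 2\t/\pi\rfloor+1)$, the quadrant representative. For the inductive step, write $B(\boldsymbol\t)$ in the form \eqref{eq:bm_def} with $C=\diag(\cos\t^{(j)}_n)$, $S=\diag(\sin\t^{(j)}_n)$ and $A_1,A_2$ order-$N/2$ (diagonal, resp.\ scalar) butterfly matrices whose generating angles still avoid $\frac\pi2\mathbb Z$. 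Because each $\t_n^{(j)}\notin\frac\pi2\mathbb Z$, the diagonal entries of $C$ and $S$ are nonzero, so $\sgn(C)=\diag(\sgn\cos\t_n^{(j)})$ and $\sgn(S)=\diag(\sgn\sin\t_n^{(j)})$ are genuine $\pm1$ diagonal matrices, and \eqref{eq: signed hadamard} applies to give $\sgn(B(\boldsymbol\t))$ in block form with blocks $\pm\sgn(C)\sgn(A_i)$ and $\pm\sgn(S)\sgn(A_i)$. Now set $\hat C=\diag(\cos\hat\t_n^{(j)})$ and $\hat S=\diag(\sin\hat\t_n^{(j)})$; since each $(\cos\hat\t_n^{(j)},\sin\hat\t_n^{(j)})$ equals $\frac1{\sqrt2}(\pm1,\pm1)$ in the quadrant of $(\cos\t_n^{(j)},\sin\t_n^{(j)})$, the pair $(\hat C,\hat S)$ is again a commuting symmetric generating pair with $\hat C^2+\hat S^2=\V I_{N/2}$, and $\sgn(C)=\sqrt2\,\hat C$, $\sgn(S)=\sqrt2\,\hat S$. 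Substituting the inductive hypothesis $\sgn(A_i)=\sqrt{N/2}\,B(\hat{\boldsymbol\t}^{(i)})$ and pulling out the scalars, the block matrix becomes $\sqrt N$ times a matrix of exactly the shape \eqref{eq:bm_def} built from $(\hat C,\hat S)$ and $B(\hat{\boldsymbol\t}^{(1)}),B(\hat{\boldsymbol\t}^{(2)})$, i.e.\ $\sqrt N\,B(\hat{\boldsymbol\t})$, which completes the induction for (i).

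Claim (ii) is a short parallel induction: the entries of $R_\t$ are $\pm\cos\t,\pm\sin\t$, all nonzero when $\t\notin\frac\pi2\mathbb Z$; and in \eqref{eq:bm_def}, $C$ and $S$ are nonsingular diagonal matrices, so each of $CA_1,SA_2,SA_1,CA_2$ is obtained from a matrix with no zero entries (the $A_i$, by the inductive hypothesis) by scaling rows by nonzero scalars, hence has no zero entries; so $B(\boldsymbol\t)$ has none either. With (i) and (ii) in hand, the conclusion follows as outlined above. I expect the only real work to be the bookkeeping in the inductive step of (i): reassembling the block matrix produced by \eqref{eq: signed hadamard} into the canonical form \eqref{eq:bm_def} and checking that $(\hat C,\hat S)$ satisfies the butterfly generating-pair conditions so that $B(\hat{\boldsymbol\t})$ is literally a butterfly matrix and thus lies in $\SO(N)$. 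No genuinely hard estimate is involved; the scalar case is essentially the computation already indicated before the statement, and the diagonal case is handled by the same recursion with $C,S$ diagonal rather than scalar.
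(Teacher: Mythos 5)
Your proof is correct, and it shares the paper's skeleton (reduce to the diagonal nonsimple case, induct on $n$ through the recursion \eqref{eq:bm_def} using \eqref{eq: signed hadamard}) but places the inductive work differently. The paper dispatches the identity $\sgn(B(\boldsymbol\t))=\sqrt N\,B(\hat{\boldsymbol\t})$ in one line (via \Cref{prop: lipschitz} and constancy of $\sgn$ on sectors) and spends its induction verifying the Gram identity $HH^T=N\V I_N$ directly: the base computation \eqref{prop: base case} shows the signed block rotation factor times its transpose is $2\V I_N$, and the inductive hypothesis $\sgn(A_i)\sgn(A_i)^T=\tfrac N2\V I_{N/2}$ gives \eqref{prop: diag case}, so $H$ is never actually identified as a scaled butterfly matrix in the orthogonality check. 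You invert this: your induction proves the structural identity $\sgn(B(\boldsymbol\t))=\sqrt N\,B(\hat{\boldsymbol\t})$ explicitly, via $\sgn(C)=\sqrt2\,\hat C$, $\sgn(S)=\sqrt2\,\hat S$, \eqref{eq: signed hadamard}, and reassembly into the form \eqref{eq:bm_def} with generating pair $(\hat C,\hat S)$ (correctly checking $\hat C^2+\hat S^2=\V I_{N/2}$), after which $HH^T=N\V I_N$ comes for free from the orthogonality $B(\hat{\boldsymbol\t})B(\hat{\boldsymbol\t})^T=\V I_N$ that holds for every butterfly matrix, while your no-zero-entries induction (ii) supplies $H\in\{\pm1\}^{N\times N}$ just as the hypothesis $\t_j\notin\tfrac\pi2\mathbb Z$ does in the paper. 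What your route buys is a fully constructive proof of the equality $\sgn(B(\boldsymbol\t))=\sqrt N\,B(\hat{\boldsymbol\t})$ (which the paper only glosses) and the elimination of a bespoke Gram computation; what the paper's route buys is a shorter induction whose steps \eqref{prop: base case}--\eqref{prop: diag case} are immediate and which never needs $(\hat C,\hat S)$ verified as a generating pair. Both arguments are sound.
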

\begin{proof}
The equality in $\sgn(B(\boldsymbol\t)) = \sqrt{N}\cdot B(\hat{\boldsymbol\t})$ follows from \Cref{prop: lipschitz} and the fact $\sgn$ is constant on sectors of $\mathbb R^{N\times N}$. To establish $H = \sgn(B)$ is a Hadamard matrix for $B$ a butterfly matrix, it suffices to consider only  $B = B(\boldsymbol \t) \in \B^{(D)}(N)$ (since each other butterfly ensemble is a subset of this). Since $\t_j \not\in \frac\pi2\mathbb Z$ for all $j$, then $H \in \{\pm 1\}^{N\times N}$. To see $HH^\top = N\V I_N$, we will use induction on $n$. 

First, we will consider $(C,S)(\boldsymbol \t) = \bigoplus_{j=1}^{N/2}(\cos \t_j,\sin\t_j)$. Since $C$ and $S$ are nonsingular diagonal matrices, then $\sgn(C)^2=\sgn(S)^2=\V I_{N/2}$ and $\sgn(C)\sgn(S) = \sgn(S)\sgn(C)$. It follows 
\begin{equation}
\label{prop: base case}
    \begin{bmatrix}
    \sgn(C) & \sgn(S)\\-\sgn(S) & \sgn(C)
    \end{bmatrix}\begin{bmatrix}
    \sgn(C) & \sgn(S)\\-\sgn(S) & \sgn(C)
    \end{bmatrix}^\top = 2\V I_N.
\end{equation}
This establishes the base case for $n=2$. Now assume the result holds for $n-1$. From \eqref{eq: signed hadamard}, we have
\begin{equation}
    H = \begin{bmatrix}
    \sgn(C) & \sgn(S) \\ -\sgn(S) & \sgn(C)
    \end{bmatrix} \begin{bmatrix}
    \sgn(A_1) & \V 0 \\\V 0&\sgn(A_2)
    \end{bmatrix}
\end{equation}
where $A_1,A_2 \in \B^{(D)}(N/2)$. By the inductive hypothesis, $\sgn(A_i)\sgn(A_i)^\top = \frac{N}2\V I_{N/2}$ so that 
\begin{equation}
\label{prop: diag case}
(\sgn(A_1)\oplus \sgn(A_2))(\sgn(A_1)\oplus \sgn(A_2))^\top = \frac{N}2\V I_N.    
\end{equation}
Combining \cref{prop: base case,prop: diag case} yields $HH^\top = N \V I_N$.
\end{proof}

\begin{remark}
    The subsampled randomized Hadamard transformation (SRHT) is popular in data compression, dimensionality reduction, and random algorithms, which is often implemented with a fixed fast Walsh Hadamard matrix and a random diagonal sign matrix (see \cite{Tr11}). This in particular makes uses of the fast $\mathcal O(N \log N)$ matrix-vector multiplication property, while the randomness is presented in the random diagonal sign matrix. Future work can explore potentially added utility of random butterfly Hadamard transformations, that maintain this quick matrix-vector multiplication but with additional randomness injected that may better serve certain structured models.
\end{remark}  

\begin{remark}
    Similar constructions using butterfly matrices initiated with $\O(2)$ instead of $\SO(2)$ generate additional Hadamard matrices, but these are still of order $2^{\mathcal O(N)}$.
\end{remark}

\begin{remark}
    \red{\Cref{prop: butterfly hadamard} yields exact Hadamard matrices only when $4 \mid N$, 
    since Hadamard matrices can only exist in these dimensions. Other butterfly constructions can yield matrices of order $N = m^n$ using generating orthogonal groups $\O(m)$ (see \cite{phd} for general butterfly constructions). For other order butterfly matrices $B$, 
    $\sgn(B)$ produces a $\pm 1$ matrix that cannot be exactly Hadamard if $4 \not\mid N$, 
    but may still serve as an \textit{approximately Hadamard} matrix in the sense of 
    Dong and Rudelson \cite{DongRudelson24}, i.e., a $\pm 1$ matrix with condition 
    number bounded independently of $N$. Whether the butterfly $\sgn$ construction 
    achieves this for other generating $\O(m)$ factors is an interesting direction for future investigation.}
\end{remark}

\section{Conclusions and future directions}

\red{We analyzed GECP growth for random butterfly matrices, identifying conditions under which pivoting strategies align and yield explicit growth factor distributions, and placing classical Hadamard growth phenomena within the butterfly framework. We also introduced butterfly Hadamard matrices via a simple $\sgn$ construction, providing a flexible mechanism for generating large families of Hadamard matrices and recovering the conjectured asymptotic count within the diagonal nonsimple class.}

\red{Several directions remain open. Empirical evidence suggests that the growth factor distribution persists without the monotonicity assumption; proving this would require a refined analysis of GECP pivot dynamics. A related problem is to characterize the permutation factors $P,Q$ for general simple butterfly matrices $B \in \B_s(N)$, particularly their deviations from perfect shuffle structure, with potential connections to permuton theory. Additionally, extending the growth analysis beyond simple butterfly matrices to more general scalar and diagonal butterfly classes  remains an open direction, where current results are primarily empirical.}

\red{The butterfly Hadamard construction also raises algorithmic questions. Replacing the Walsh--Hadamard matrix in SRHT with a random butterfly Hadamard matrix preserves $\mathcal O(N\log N)$ complexity while introducing additional structured randomness; its impact on randomized numerical linear algebra remains to be explored. Extending the growth analysis to $\B(N)$, $\B_s^{(D)}(N)$, $\B^{(D)}(N)$, and to $\O(2)$-based constructions is another natural direction. More generally, replacing $\SO(2)$ with $\O(m)$ yields butterfly matrices of order $N = m^n$. When $N$ is not divisible by $4$, $\sgn(B)$ is not exactly Hadamard, but may produce approximately Hadamard matrices in the sense of \cite{DongRudelson24}, suggesting that the butterfly $\sgn$ construction may extend beyond the power-of-two setting.}

\appendix

\section{Connectedness of butterfly matrices}\label{sec: appendix}

We first recall several elementary properties of the Frobenius norm. It is unitarily invariant and satisfies
\begin{equation}\label{eq: frob tri}
\|U_1V_1-U_2V_2\|_F \le \|U_1-U_2\|_F+\|V_1-V_2\|_F, \quad U_i,V_i\in \U(N).
\end{equation}
Moreover,
\begin{equation}\label{eq: frob add}
\|A\oplus B\|_F^2=\|A\|_F^2+\|B\|_F^2, \qquad
\|A\otimes B\|_F=\|A\|_F\|B\|_F.
\end{equation}

\medskip

We begin with the scalar butterfly model. Using the mixed-product property,
\begin{equation}\label{eq: butterfly svd decomp}
B(\boldsymbol\t)=U_n \Lambda_{\boldsymbol\t} U_n^*,
\end{equation}
where $U_n=\bigotimes^n U$ and $\Lambda_{\boldsymbol\t}=\bigotimes_{j=1}^n \Lambda_{\t_{n-j+1}}$, with
\[
U=\frac{1}{\sqrt{2}}\begin{bmatrix}1&1\\ i&-i\end{bmatrix}, 
\qquad 
\Lambda_\t=\begin{bmatrix}e^{i\t}&\\ &e^{-i\t}\end{bmatrix}.
\]
Since $\Lambda_{\t+\vep}=\Lambda_\t\Lambda_\vep$, we have
\begin{equation}\label{eq: Bs group}
B(\boldsymbol\t+\boldsymbol\vep)=B(\boldsymbol\t)B(\boldsymbol\vep),
\end{equation}
so $\B_s(N)$ is an abelian subgroup of $\SO(N)$.

We also note the bound
\begin{equation}\label{eq: elem bound exp}
1-\prod_{j=1}^n \cos x_j \le \frac12 \|\V x\|_2^2, \qquad \V x\in\mathbb R^n,
\end{equation}
which follows from the elementary bound $1-\cos x \le x^2/2$ applied inductively along with the fact $1-xy=1-x+x(1-y)$.

\begin{lemma}\label{prop: cont simple scalar}
Let $B(\boldsymbol\t)\in\B_s(N)$ and $\boldsymbol\vep\in\mathbb R^n$. Then
\begin{equation}\label{eq: cont simple scalar}
\|B(\boldsymbol\t)-B(\boldsymbol\t+\boldsymbol\vep)\|_F
\le \sqrt{N}\,\|\boldsymbol\vep\|_2.
\end{equation}
\end{lemma}

\begin{proof}
Using \eqref{eq: Bs group} and unitary invariance,
\[
\|B(\boldsymbol\t)-B(\boldsymbol\t+\boldsymbol\vep)\|_F
=\|\V I_N-B(\boldsymbol\vep)\|_F.
\]
Moreover,
\[
B(\boldsymbol\vep)+B(\boldsymbol\vep)^*
=2\prod_{j=1}^n \cos \vep_j\,\V I_N,
\]
so
\begin{align*}
\|\V I_N-B(\boldsymbol\vep)\|_F^2
&=\trace\!\left(2\V I_N-(B(\boldsymbol\vep)+B(\boldsymbol\vep)^*)\right) =2N\Bigl(1-\prod_{j=1}^n \cos \vep_j\Bigr)
\le N\|\boldsymbol\vep\|_2^2,
\end{align*}
by \eqref{eq: elem bound exp}.
\end{proof}

A direct consequence is that for each $j$,
\begin{equation}\label{eq: lemma1}
\|\V I_N-\V I_{N2^{-j}}\otimes B(\vep)\otimes \V I_{2^{j-1}}\|_F^2
\le N\vep^2.
\end{equation}

\medskip

We now pass to block butterfly factors. The models admit factorizations
\[
\B(N)=\prod_{j=1}^n \mathcal D(N,N2^{-j}), \quad
\B_s^{(D)}(N)=\prod_{j=1}^n \mathcal D_s^{(D)}(N,N2^{-j}), \quad
\B^{(D)}(N)=\prod_{j=1}^n \mathcal D^{(D)}(N,N2^{-j}),
\]
where
\begin{align}
\mathcal D(N,k)&=\bigoplus^k \SO(2)\otimes \V I_{N/2k}, \\
\mathcal D_s^{(D)}(N,k)&=\mathcal D_s^{(D)}(N/k,1)\otimes \V I_k
= P_{k,N}\mathcal D(N,N/2k)P_{k,N}^\top, \label{eq: Dk vs Dksd}\\
\mathcal D^{(D)}(N,k)&=\bigoplus^k \mathcal D_s^{(D)}(N/k,1),\label{eq: Dkd vs Dksd}
\end{align}
for $P_{k,N}$ the perfect shuffle permutation matrix such that $A \otimes \V I_k = P_{k,N}(\V I_k \otimes A) P_{k,N}^\top$ for any $A \in \mathbb R^{N/k \times N/k}$.

\begin{lemma}\label{lemma: D_k ineq}
Let $\boldsymbol\vep$ be appropriately dimensioned. Then
\begin{equation}
\|B(\boldsymbol\t)-B(\boldsymbol\t+\boldsymbol\vep)\|_F^2 \le
\begin{cases}
\dfrac{N}{k}\|\boldsymbol\vep\|_2^2, & B\in\mathcal D(N,k), \\[6pt]
2k\|\boldsymbol\vep\|_2^2, & B\in\mathcal D_s^{(D)}(N,k), \\[6pt]
2\|\boldsymbol\vep\|_2^2, & B\in\mathcal D^{(D)}(N,k).
\end{cases}
\label{eq: ineq Dk}
\end{equation}
\end{lemma}

\begin{proof}
Each block model forms an abelian group, so $B(\boldsymbol\t+\boldsymbol\vep)=B(\boldsymbol\t)\B(\boldsymbol\vep)$, and hence
\[
\|B(\boldsymbol\t)-B(\boldsymbol\t+\boldsymbol\vep)\|_F =\|\V I_N-B(\boldsymbol\vep)\|_F.
\]
For $\mathcal D(N,k)$,
\[
\|\V I_N-B(\boldsymbol\vep)\|_F^2
=\sum_{j=1}^k \|\V I_{N/k}-B(\vep_j)\otimes \V I_{N/2k}\|_F^2
\le \frac{N}{k}\|\boldsymbol\vep\|_2^2,
\]
using \eqref{eq: frob add} and \eqref{eq: lemma1}. The remaining bounds follow from \eqref{eq: Dk vs Dksd}--\eqref{eq: Dkd vs Dksd} and unitary invariance.
\end{proof}

\medskip

We now prove the Lipschitz continuity conditions stated in \Cref{prop: lipschitz}.

\begin{proof}[Proof of \Cref{prop: lipschitz}]
The case $\B_s(N)$ is \Cref{prop: cont simple scalar}. For $\B(N)$, write
\[
B(\boldsymbol\t)=\prod_{j=1}^n B_j(\boldsymbol\t_j), \quad B_j(\boldsymbol\t_j)\in\mathcal D(N,N2^{-j}),
\]
and similarly decompose $\boldsymbol\vep$. Then, by \eqref{eq: frob tri}, \Cref{lemma: D_k ineq}, and the Cauchy-Schwarz inequality,
\begin{align*}
\|B(\boldsymbol\t)-B(\boldsymbol\t+\boldsymbol\vep)\|_F
&= \left\|\prod_{j=1}^n B_j(\boldsymbol{\t}_j) - \prod_{j=1}^n B_j(\boldsymbol{\t}_j + \boldsymbol{\vep}_j) \right\|_F\\
&\le \sum_{j=1}^n \|B_j(\boldsymbol\t_j)-B_j(\boldsymbol\t_j+\boldsymbol\vep_j)\|_F\\
&\le \sum_{j=1}^n 2^{j/2}\|\boldsymbol\vep_j\|_2 \\
&\le \Bigl(\sum_{j=1}^n 2^j\Bigr)^{1/2}
\Bigl(\sum_{j=1}^n \|\boldsymbol\vep_j\|_2^2\Bigr)^{1/2}
=\sqrt{2(N-1)}\,\|\boldsymbol\vep\|_2.
\end{align*}
The proofs for $\B_s^{(D)}(N)$ and $\B^{(D)}(N)$ are identical, using the corresponding bounds from \Cref{lemma: D_k ineq}.
\end{proof}

\medskip

For comparison, recall that Givens rotations
\[
G(\t,i,j)=P_{(2\,j)(1\,i)}(B(\t)\oplus \V I_{n-2})P_{(2\,j)(1\,i)}^\top
\]
generate $\SO(n)$. The same argument yields Lipschitz continuity of
\[
\boldsymbol\theta \mapsto A(\boldsymbol\theta)
=\prod_{i<j} G(\t_{\alpha_{ij}},i,j),
\]
with $m=\frac{n(n-1)}2$ parameters.

\begin{proposition}[\cite{p24_orth}]
The map $\boldsymbol\theta\mapsto A(\boldsymbol\theta)\in\SO(N)$ is $\sqrt{N(N-1)}$-Lipschitz.
\end{proposition}

\begin{remark}
Applying \eqref{eq: frob tri} directly to $\B_s(N)$ yields the weaker constant $\sqrt{Nn}$. The sharper $\sqrt{N}$ bound arises from exploiting the Kronecker structure. More generally, this approach produces Lipschitz constants scaling with the square root of the parameter dimension for each butterfly model.
\end{remark}

\bibliographystyle{plain}
\bibliography{references}

\end{document}